\theoremstyle{definition}
\numberwithin{equation}{section}
\newtheorem*{rep@theorem}{\rep@title}
\newcommand{\newreptheorem}[2]{%
\newenvironment{rep#1}[1]{%
 \def\rep@title{#2 \ref{##1}}%
 \begin{rep@theorem}}%
 {\end{rep@theorem}}}
\newtheorem{theorem}{Theorem}[section]
\newtheorem{lemma}[theorem]{Lemma}
\newtheorem{proposition}[theorem]{Proposition}
\newtheorem{definition}[theorem]{Definition}
\newtheorem{remark}[theorem]{Remark}
\newtheorem{example}[theorem]{Example}
\newtheorem*{conjecture*}{Conjecture}
\newcommand{\fai}{\varphi}
\newcommand{\Tr}{{\rm Tr}}
\newcommand{\smnoind}{{\smallskip\noindent}}
\newcommand{\id}{{\rm id}}
\newcommand{\SG}{{\rm SG}}
\newcommand{\Pol}{{\rm Pol}}
\newcommand{\spec}{{\rm Spec}}
\newcommand{\abs}[1]{\left\vert#1\right\vert}
\newcommand{\norm}[1]{\left\|#1\right\|}
\newcommand{\CI}{\mathcal{I}}
\newcommand{\CA}{\mathcal{A}}
\newcommand{\CB}{\mathcal{B}}
\newcommand{\KH}{\mathfrak{H}}
\newcommand{\KK}{\mathfrak{K}}
\newcommand{\KM}{\mathfrak{M}}
\newcommand{\BR}{\mathbb{R}}
\newcommand{\BN}{\mathbb{N}}
\newcommand{\BZ}{\mathbb{Z}}
\newcommand{\BC}{\mathbb{C}}
\newcommand{\BG}{{\mathbb{G}}}
\newcommand{\Bo}{\mathbf{1}}
\newcommand{\BGd}{{\widehat{\mathbb{G}}}}
\newcommand{\Rep}{\mathrm{Rep}}
\newcommand{\supp}{\mathrm{supp}\,}
\newcommand{\Irr}{{\rm Irr}}
\begin{document}
\title[Amenable fusion algebraic actions of discrete quantum groups]{Amenable fusion algebraic actions of discrete quantum groups on compact quantum spaces}

\author{Xiao Chen}
\address{Xiao Chen, School of Mathematics and Statistics, Shandong University, Weihai, 264209, PR China}
\email{chenxiao@sdu.edu.cn}
\thanks{Xiao Chen is supported by the NSFC no. 11701327.}
\author{Debashish Goswami}
\address{Debashish Goswami, Statistics-Mathematics Unit, Indian Statistical Institute, Kolkata, India}
\email{goswamid@isical.ac.in}
\author{Huichi Huang}
\address{Huichi Huang, College of Mathematics and Statistics, Chongqing University, Chongqing, 401331, P.R. China}
\email{huanghuichi@cqu.edu.cn}
\thanks{Huichi Huang was partially supported by NSFC no. 11871119 and Chongqing Municipal Science and Technology Commission fund no. cstc2018jcyjAX0146.}

\begin{abstract}
In this paper, we introduce actions of fusion algebras on unital $C^*$-algebras, and define amenability for fusion algebraic actions.
Motivated by S.\ Neshveyev et al.'s work, considering the co-representation ring of a compact quantum group as a fusion algebra, we define the canonical fusion algebraic (for short, CFA) form of a discrete quantum group action on a compact quantum space.
Furthermore, through the CFA form, we define FA-amenability of discrete quantum group actions, and present some basic connections between FA-amenable  actions and amenable discrete quantum groups.
As an application, thinking of a state on a unital $C^*$-algebra as a ``probability measure" on a compact quantum space, we show that amenability for a discrete quantum group is equivalent to both of FA-amenability for an action of this discrete quantum group on a compact quantum space and the existence of this kind of ``probability measure" that is FA-invariant under this action.

\vspace{04pt}

\noindent{\it 2020 MSC numbers}: 20G42, 46L89, 18M20, 46L55, 37A55
\vspace{04pt}

\noindent{\it Keywords}: discrete quantum group, fusion algebra, action, invariant state, amenability
\end{abstract}


\maketitle

\section{Introduction}

The notion of amenability essentially begins with Lebesgue (1904): could Lebesgue's measure be extended as a finitely additive measure defined on all subsets of $\BR^n$ which is invariant under any isometry?
Hausdorff answered negatively for $n\geqslant 3$ in 1914 and Banach positively for $n\leqslant 2$ in 1923.
The class of amenable groups was introduced and studied by von Neumann (1929) and used to explain why the Banach-Tarski Paradox occurs only for $\BR^n$ ($n\geqslant 3$), i.e., the desired measures mention above exist for $\BR^n$ ($n\leqslant 2$).
Nowadays, amenability has become an important concept in many fields, and has been generalized for various subjects, such as Banach algebras, $C^*$-algebras, $W^*$-algebras, operator spaces and quantum groups. For more details about the theory and history of amenability, the readers may refer to \cite{Pat1988}, \cite{Rund2002}, \cite{Green1969} and \cite{Bed-Tus2003}.

Since 1940s, the main concern shifts from finitely additive measures to means: integrating a positive, finitely additive probability measure on a space $\mathfrak{X}$ gives rise to a state on $L^\infty(\mathfrak{X})$. In \cite[Page 18]{Green1969}, the following famous problem was formulated:

\noindent
{\bf Q1:} if, under a certain group $G$ action on a topological space $\mathfrak{X}$, there exists a finitely additive $G$-invariant measure, also known as $G$-invariant mean, on $\mathfrak{X}$, is $G$ amenable?

The question remained longtime unanswered until van Douwen gave a counterexample in \cite{Dou1990}.
Viewing a von Neumann algebra as a quantum measure space,  the author in \cite[Proposition 3.6]{Anant1979} answered {\bf Q1} in the von Neumman algebraic setting, that is, a locally compact group $G$ is amenble if and only if there is an $G$-invariant mean under an amenable action of $G$ on a von Neumann algebra.
Recently, in \cite{Moa2018},  the above result \cite[Proposition 3.6]{Anant1979} had been generalized to the discrete quantum group case.
Since a $C^*$-algebra is thought of as a quantum topological space, it is then a natural question to propose a $C^*$-algebraic and quantum analogue of the question {\bf Q1} as follows.

\noindent
{\bf Q2:} if there exists an invariant state on a $C^*$-algebra under an action of a discrete quantum group, then is this quantum group amenable?

For the question {\bf Q2} above, we give a new realization of discrete quantum group actions on compact quantum spaces.  In \cite{Hab-Hat-Nesh2021} and \cite{Nes-Yam2014}, S.\ Neshveyev and his students shows that every discrete quantum group action on a unital $C^*$-algebra is equivalent to a module algebraic structure on this unital $C^*$-algebra. In Section~\ref{sec:CFA-act} of this paper, we use this module algebraic structure to define the \emph{canonical fusion algebraic form} (for short, CFA form, see Definition~\ref{def:dqg-cfa-act}) of a discrete quantum group action on a unital $C^*$-algebra, which means that a usual action of a discrete quantum group is actually a special class of fusion algebraic actions defined in Definition~\ref{def:fa-act}.

More interestingly, compared with Vaes's definition of discrete quantum group actions (\cite[Definition 1.24]{Vae-Ver2007}), the manner of a CFA action preserves the ``point by point act" feature of the ordinary discrete group action. So the CFA form of a discrete quantum group action, which is closer to the classical mode, looks like more direct and more visual.
That is why we here characterize discrete quantum group actions in fusion algebraic setting instead of in the widely used way.

In Section~\ref{sec:amen-CFA-act}, using the CFA form of a discrete quantum group action, we introduce the notion of a \emph{FA-amenability} action, which is compatible with Ozawa's definition (see \cite{Brow-Ozaw2008}), and also show in Proposition~\ref{prop:basicequiv} that a discrete quantum group is amenable if and only if the trivial action on $\BC$ is FA-amenable if and only if  every action is FA-amenable.
Finally, we apply FA-invariant states and FA-amenable actions to give a characterization of amenable discrete quantum groups in Theorem~\ref{thm:mainthm}:
\begin{quote}
A discrete quantum group is amenable, if and only if there exists an FA-invariant state under an FA-amenable action.
\end{quote}
This theorem, which provides a positive answer to the question {\bf Q2} above, can be considered as a $C^*$-algebraic analogue of \cite[Theorem 4.7]{Moa2018}, and in fact is also another quantum version of \cite[Propostion 3.6]{Anant1979}.

Before the main parts of this paper, as a preliminary, we will in the next section introduce some notations and recall some basic theories of compact quantum groups, discrete quantum groups and fusion algebras.

\bigskip

\section{Preliminaries and notations}\label{sec:pre-not}

\noindent
{\bf\emph{Conventions.}} Within this paper, we denote by $\BC$ (resp., $\BR$, $\BZ$, $\BN_0$)  the set of all complex numbers (resp., real numbers, integers and non-negative integers),  and by $B(\KH,\KK)$ the space of bounded linear operators from a complex Hilbert space $\KH$ to another space $\KK$, and $B(\KH)$ stands for $B(\KH,\KH)$. We use the convention that the inner product $\langle \cdot , \cdot \rangle$ of a complex Hilbert space $\KH$ is conjugate-linear in the second variable. Let $\id_X$  be the identity operator on a space or an algebra $X$.

The notation $\CA\otimes\CB$ always means the minimal tensor product of two $C^*$-algebras $\CA$ and $\CB$. For any integer $n\geqslant 1$, we denote by $M_n(\CA)$ be the $C^*$-algebra of all $n\times n$-matrices whose entries lie in a $C^*$-algebra $\CA$, by $I_n$ the unit matrix, and by $\Tr$ a normalized trace on $M_n(\CA)$ or $B(\KH)$.

Define ${\rm diag}[x;n]:=\begin{pmatrix}
  x &        & O   \\
      & \ddots &     \\
  O   &        & x \\
\end{pmatrix}_{n\times n}$ for any $n\in\BN$ and any element $x$ in some algebra.
The Kronecker symbol is denoted by $\delta_{i,j}:= \left\{\begin{array}{ll}
  1, & i=j\\
  0, & i\neq j
\end{array} \right.$, for any $i$ and $j$ in a index set $\CI$.

In this paper all $C^*$-algebras are assumed to be separable.

\subsection{Compact and discrete quantum groups}\label{subsec:cdqg}
Consider compact quantum groups in the sense of S.\ L.\ Woronowicz (see \cite{Wor1987} and \cite{Wor1998}), which are non-commutative analogues of compact groups. Every discrete quantum group is usually defined as the dual of a compact quantum group.  For more details about compact and discrete quantum groups, the reader may refer to  \cite {Fra-Ska-Sol2017}, \cite{Nes-Tus2013}, \cite{Tim2008} and references therein.

\begin{definition}\label{def:cqg}
A \textbf{compact quantum group} is a pair $(\CA, \Delta)$ consisting of a unital $C^*$-algebra $\CA$ with its unit $\Bo_\CA$, and a {\bf co-product} $\Delta$ which is a unital $*$-homomorphism  from $\CA$ to $\CA\otimes\CA$
such that
\begin{enumerate}
  \item $(\id\otimes\Delta)\Delta=(\Delta\otimes\id)\Delta$;
  \item $\Delta(\CA)(\Bo_\CA\otimes\CA)=\Delta(\CA)(\CA\otimes\Bo_\CA)$ are dense in $\CA\otimes\CA$.
\end{enumerate}
\end{definition}

One may think of $\CA$ in Definition~\ref{def:cqg} as $C(\BG)$, the $C^*$-algebra of continuous functions on a compact quantum space $\BG$ with a quantum group structure. In the rest of the paper we write a compact quantum group $(\CA,\Delta)$ with its unit $\Bo_\CA$ as $(C(\BG),\Delta_\BG)$ with its unit $\Bo_\BG$.

A {\bf non-degenerate (unitary) co-representation} $U$ of a compact quantum group $\BG$ is an invertible (unitary) element in $\KM(K(\KH_U)\otimes C(\BG))$ satisfying $U_{12}U_{13}=(\id\otimes\Delta)U$, where $K(\KH_U)$ is the $C^*$-algebra of compact linear operators on a complex Hilbert space $\KH_U$, and $\KM(K(\KH_U)\otimes C(\BG))$ is the multiplier  $C^*$-algebra of $K(\KH_U)\otimes C(\BG)$.
Here both of $U_{12}$ and $U_{13}$ are the usual ``leg notation", similarly hereinafter.
The Hilbert space $H_U$ is called {\bf carrier Hilbert space} of $U$.
From now on, we always assume co-representations are non-degenerate.

It is well-established that every finite dimensional co-representation is equivalent to a unitary co-representation, and every irreducible co-representation is finite dimensional. Let $\Rep(\BG)$ be the class of equivalent classes of all finite dimensional unitary co-representations of $\BG$.
For every $\gamma\in\Rep(\BG)$, let $U^\gamma\in\gamma$ be unitary representative of $\gamma$, and $\KH_\gamma$ be its carrier Hilbert space with dimension $d_\gamma$.

Upon choosing an orthonormal basis in $\KH_\alpha$ we can express $U^\alpha$ as a unitary matrix $(u^\alpha_{ij})^{d_\alpha}_{i,j=1}$ in $M_{d_\alpha}(C(\BG))$, that is,
$$U^\alpha=\sum^{d_\alpha}_{i,j=1}e_{ij}\otimes u^\alpha_{ij},$$
where $\{e_{ij}\}_{i,j=1,2,...,d_\alpha}$ is the corresponding basis of matrix units in $B(\KH_\alpha)\cong M_{d_\alpha}(\BC)$.

For any given two $\alpha, \beta\in\Rep(\BG)$ with their matrix forms $U^\alpha$ and $U^\beta$, define the {\bf direct sum}, denoted by $\alpha+\beta$, as the equivalent class of  unitary representations of dimension $d_\alpha+d_\beta$, given by $\begin{pmatrix}
  U^\alpha &   O   \\
  O   &    U^\beta \\
\end{pmatrix}$,
and the {\bf tensor product}, denoted by $\alpha\beta$, as the equivalent class of  unitary representations of dimension $d_\alpha d_\beta$, given by $U^{\alpha\beta}=U_{13}^{\alpha}U_{23}^{\beta}$.

Also for $\gamma\in\Rep(\BG)$, the {\bf conjugate} (also known as {\bf contragredient} or {\bf adjoint}) co-representation $\bar{\gamma}$ of $\gamma$ is given by $\overline{U_\gamma}:=((u^\gamma_{ij})^*)^{d_\gamma}_{i,j=1}$ on the conjugate Hilbert space $\overline{\KH}_\gamma$ of $\KH_\gamma$.

For any $\alpha\in\Rep(\BG)$, the {\bf character} $\chi(\alpha)$ of $\alpha$ is given by $$\chi(\alpha)=\sum^{d_\alpha}_{i=1}u^{\alpha}_{ii}.$$
Note that $\chi(\alpha)$ is independent of the choice of representatives of $\alpha$. Also we have $\norm{\chi(\alpha)}\leqslant d_\alpha$, since $\sum^{d_\alpha}_{k=1}u^\alpha_{ik}(u^\alpha_{ik})^*=\Bo_\BG$ for every $1\leqslant i\leqslant d_\alpha$. Moreover,
$$\chi(\alpha+\beta)=\chi(\alpha)+\chi(\beta),\quad \chi(\alpha\beta)=\chi(\alpha)\chi(\beta),\quad \chi(\alpha)^*=\chi(\bar\alpha)$$
for finite-dimensional co-representations $\alpha$ and $\beta$.

Every co-representation of a compact quantum group is a direct sum of irreducible representations.
Let $\BGd$ be the set of equivalent classes of irreducible unitary co-representations of $\BG$.
The algebra $\Pol(\BG)$ generated by all matrix coefficients arising from irreducible co-representations becomes a Hopf $*$-algebra (with the restricted co-product) which is dense in $C(\BG)$.

Denote by $h_{\BG}$ the unique Haar state on $\BG$ with GNS-construction $(L^2(\BG), \theta_{\BG}, \Lambda_{\BG})$.
Here $\theta_{\BG}$ is a $C^*-$algebra isomorphism from $C(\BG)$ to $\theta_{\BG}(C(\BG))$,
$\Lambda_{\BG}$ is a quotient map from $C(\BG)$ to $L^2(\BG)$, and
$$L^2(\BG)=\bigoplus_{\alpha\in\BGd} H^\alpha=\bigoplus_{\alpha\in\BGd} \KH_\alpha\otimes\KH_{\bar\alpha},$$
with $H^\alpha$ the subspace of $L^2(\BG)$ spanned by matrix elements $\{u^\alpha_{ij}\}^{d_\alpha}_{i,j=1}$ of $U^\alpha$.

In our paper, we identify a {\bf discrete quantum group} with $\BGd$ as the dual of some compact quantum group $\mathbb{G}$.
The $C^*$-algebra of the  virtual objects--``continuous functions" on $\BGd$ is a $c_0$-direct sum $$c_0(\BGd)=\bigoplus_{\alpha\in\BGd} B(\KH_\alpha)=\bigoplus_{\alpha\in\BGd} M_{d_\alpha}(\BC).$$

And the fundamental unitary of a compact quantum group $\BG$ can be written as
$$W_\BG=\bigoplus_{\alpha\in\Irr(\BG)} U^\alpha \in M(c_0(\BGd)\otimes C(\BG))\subseteq B(L^2(\BG)\otimes L^2(\BG)),$$
where $U^\alpha$ is a unitary representative of $\alpha$, and $\subseteq$ above makes sense under the GNS-construction $(L^2(\BG), \theta_{\BG}, \Lambda_{\BG})$.
Denote  $W_{\BGd}$ by the fundamental unitary of $\BGd$ given by $\Sigma  W^*_\BG\Sigma,$ where $\Sigma$ denotes the usual flip map.
Also we denote by $\Delta_{\BGd}$ the co-product of $\BGd$, and have the following well-known formulae
\begin{equation}\label{eq:delta-W-1}
(\Delta_{\BGd}\otimes\id)(W_\BG)=(W_\BG)_{23}(W_\BG)_{13},
\end{equation}
and
\begin{equation}\label{eq:delta-W-2}
(\varepsilon_\BGd\otimes\id)(W_\BG)=\Bo_\BG,
\end{equation}
where $\varepsilon_\BGd$ is the co-unit of $\BGd$.

Throughout this paper, we assume that every compact quantum group $\CA=C(\BG)$ is {\bf second countable}, i.e., $\BG$ has a separable underlying $C^*$-algebra, which amounts to saying $\BGd$ is at most countable and  the GNS space arising from the Haar state is separable.

\subsection{Fusion algebras}\label{sec:fa}
In this part we recall the notions of fusion algebras and amenability for fusion algebras. These topics were treated by F. Hiai and M. Izumi (\cite{Hia-Izu1998}). Other references on the subject are \cite{Kyed2008} and references therein.

\begin{definition}\label{def:fa}
A {\bf fusion algebra} $R(\CI)$ is a unital ring which is a free $\BZ$-module $\BZ[\CI]$ with a basis $\CI$ such that
\begin{enumerate}
  \item The unit $e$ is in $\CI$.
  \item The abelian monoid $\BN_0[\CI]$ is closed under multiplication, that is, for all $\alpha,\beta$ in $I$, there exists uniquely a family of nonnegative integers $(N_{\alpha,\beta}^\gamma)_{\gamma\in \CI}$ such that
      $$\alpha\beta=\sum_{\gamma\in \CI}N_{\alpha,\beta}^\gamma\gamma.$$
    \item There exists a $\mathbb{Z}$-linear, anti-multiplicative map~(called {\bf involution}) $x\to\bar{x}$ preserving $\CI$.
    \item {\bf Fronenius reciprocity} holds: $$N_{\alpha,\beta}^\gamma=N_{\gamma,\bar{\beta}}^\alpha=N_{\bar{\alpha},\gamma}^\beta$$ for all $\alpha,\beta,\gamma\in \CI$.
  \item There exists a $\BZ$-linear multiplicative map $d: \BZ[\CI]\to [1,\infty)$ such that $d(x)=d(\bar{x})$ for all $x\in \CI$. We call $d$ the {\bf dimension function}.
\end{enumerate}
\end{definition}

Note that the multiplicativity of $d$ implies
$$1=\sum_{\gamma\in\CI}\frac{d(\gamma)}{d(\alpha)d(\beta)}N_{\alpha,\beta}^\gamma,$$
for all $\alpha,\beta\in\CI$.
For an element $f=\sum_{\alpha\in\CI}k_\alpha\alpha\in R(\CI)$, the set $\{\alpha\in\CI\,\lvert\,k_\alpha\neq 0\}$ is called the {\bf support of $f$} and denoted $\supp f$.
We shall also consider the {\bf complexified fusion algebra} $\BC\otimes_{\BZ}\BZ[\CI]$ which will be denoted $\BC[\CI]$ in the following.
Note that this becomes a complex $*$-algebra with the induced algebraic structures, and the trace $\tau_\CI$ is extended to $\BC[\CI]$.

\begin{example}\label{ex:fa}
$(a)$ For any discrete group $\Gamma$, there exists a compact quantum group  $\BG$ which is the dual group of $\Gamma$. On one hand, we know that $C(\BG)$ is the reduced group $C^*$-algebra $C^*_r(\Gamma)$ of $\Gamma$.
On the other hand, the $C^*$-algebra $c_0(\Gamma)$, which is the $c_0$-direct sum $\bigoplus_{\gamma\in\Gamma}\BC$, can be presented as the $c_0$-direct sum $\bigoplus_{\alpha\in\BGd}M_{d_\alpha}(\BC)$ and $d_\alpha=1$. Namely, every element in $\Gamma$ can be viewed as a one-dimensional co-representation of $\BG$, and $\Gamma$ can be actually identified with $\BGd$.
In particular, if $\Gamma$ is abelian, then $\BG$ is the compact dual group $\widehat\Gamma$ consisting of all one-dimensional representations of $\Gamma$, and also $\BGd=\Gamma$.
Hence, the integral group ring $\BZ[\Gamma]$ becomes a fusion algebra with the basis $\Gamma$, when endowed with (the  $\BZ$-linear exension of) inversion as involution and trivial dimension function given by $d(\alpha)=d_\alpha=1$ for all $\alpha\in\Gamma$. The complexified fusion algebra $\BC[\Gamma]$ (i.e., $\Pol(\BG)$) is just the usual complex group ring of $\Gamma$ , whose completion, under the norm induced by left regular representation (resp., universal representation), is the reduced group $C^*$-algebra $C^*_r(\Gamma)$ (resp., the universal group $C^*$-algebra $C^*_u(\Gamma)$).

\smnoind
$(b)$ If $\BG=(C(\BG),\Delta_\BG)$ is a compact quantum group, its irreducible co-representations constitute the basis of a fusion algebra with tensor product as multiplication.
As explained in Section~\ref{subsec:cdqg}, for any two $\alpha,\beta\in\BGd$, the tensor product $\alpha\beta$ of $\alpha$ and $\beta$ can be de decomposed as
 $$\alpha\beta=\sum_{\gamma\in\BGd}N_{\alpha,\beta}^\gamma\gamma,$$
where $N_{\alpha,\beta}^\gamma=0$ for all but finitely many $\gamma$ in $\BGd$.
Also \cite[Proposition 3.4]{Wor1987} (or \cite[Example 2.3]{Kyed2008}) implies that he Frobenius reciprocity law holds for $\BGd$.
Thus, a product can be defined on the free $\BZ$-module $\BZ[\BGd]$ by the tensor product above, while the addition comes from the direct sum on $\BGd$.

Moreover, the trivial co-representation $e=1\otimes\Bo_\BG\in\BGd$ is a unit for this product.
Also we can extend the conjugate operation $\alpha\in\BGd\mapsto\bar\alpha\in\Rep(\BG)$ to the ring $\BZ[\BGd]$, and define a dimension function $d:\BZ[\BGd]\rightarrow [1,+\infty)$ by $d(\alpha)=d_\alpha$.

Hence, corresponding to a compact quantum group $\BG$,  we obtain a fusion algebra $R(\BGd)$, also viewed as a free $\BZ$-module $\BZ[\BGd]$, with its basis $\BGd$.
\end{example}

Next, we introduce the convolution product and the convolution operators associated with a fusion algebra $R(\CI)$. All terminologies and notations below mainly come from \cite{Hia-Izu1998} and \cite{Kyed2008}.

Denote by $\mu_0$ the counting measure with the weight $d^2$, that is, $\mu_0(\{\xi\})=d(\xi)^2$ for any $\xi\in\CI$.
For $1\leqslant p\leqslant +\infty$, let $\ell^p(\CI)$ (resp., $\ell^p(\CI,\mu_0)$) be the usual complex Banach $\ell^p$-space on $\CI$ with the counting measure (resp., the measure $\mu_0$ defined above), whose norm is denoted $\norm{\cdot}_p$ (resp., $\norm{\cdot}_{p,\mu_0}$).
The Dirac measure at $\xi\in\CI$ is denoted by $\delta_\xi$, which is also viewed as a function on $\CI$ such that it sends $\xi$ to $1$ and all other elements in $\CI$ to $0$.

All $\ell^p$-space have a common dense subspace denoted by $C_c(\CI)$, which consists of all typical element of  the form $f=\sum_{\xi\in\CI} c_\xi\delta_\xi$ with $\supp f$ being finite and $c_\xi\in\BC$. We also identify an element $\sum_{\xi\in\CI} c_\xi\delta_\xi\in C_c(\CI)$ with a finite support measure when $c_\xi\geqslant 0$.

For $\xi, \eta\in\CI$ we define the (weighted) convolution of the corresponding Dirac measure, $\delta_\xi$ and $\delta_\eta$, as
$$\delta_\xi*\delta_\eta=\sum_{\alpha\in\CI}\frac{d(\alpha)}{d(\xi)d(\eta)}N^\alpha_{\xi,\eta}\delta_\alpha\in\ell^1(\CI).$$

This extends linearly and continuously to a submultiplicative product on $\ell^1(\CI).$
For $f\in\ell^\infty(\CI)$ and $\xi\in\CI$ we define $\lambda_\xi(f), \rho_\xi(f):\CI\rightarrow\BC$ by
$$\lambda_\xi(f)(\eta)=\sum_{\alpha\in\CI}f(\alpha)(\delta_{\bar\xi}*\delta_\eta)(\alpha)$$ and
$$\rho_\xi(f)(\eta)=\sum_{\alpha\in\CI}f(\alpha)(\delta_\eta*\delta_\xi)(\alpha).$$

For each $f\in\ell^\infty(\CI)$ we have  $\lambda_\xi(f), \rho_\xi(f)\in\ell^\infty(\CI)$ and for each $p\in\BN\cup\{\infty\}$
the map $\lambda_\xi, \rho_\xi:\ell^\infty(\CI)\rightarrow\ell^\infty(\CI)$ restrict to bounded operators on $\ell^p(\CI)$ denoted $\lambda_{p,\xi}$ and $\rho_{p,\eta}$ respectively. By linear extension, we therefore obtain a map
$\lambda_{p,-}:\BZ[\CI]\rightarrow B(\ell^p(\CI,\sigma))$, which respects the weighted convolution product. Similarly, this map also can be extended onto the space of finitely supported measures on $\CI$ (resp., $\ell^p(\CI)$), denoted by $\lambda_{p,\mu}$ for any finitely support measure $\mu$ on $\CI$ (resp., $\lambda_{p,\phi}$ for any $\ell^p$-function $\phi\in\ell^p(\CI)$).
For more details, the reader may refer to \cite[Page 675-676]{Hia-Izu1998} and \cite[Section 2]{Kyed2008}.

\begin{remark}
In particular when $p=2$, if the operator $U_{\CI,\mu_0}:\ell^2(\CI)\rightarrow\ell^2(\CI,\mu_0)$ is defined by $U_{\CI,\mu_0}(\delta_\eta)=\frac{1}{d(\eta)}\delta_\eta$ for any $\eta\in\CI$, then $U_{\CI,\mu_0}$ is unitary and intertwines with the operator
$$l_\xi:\delta_\eta\longmapsto\frac{1}{d(\xi)}\sum_{\alpha\in\CI}N^\alpha_{\xi,\eta}\delta_\alpha, \quad\forall \xi,\,\eta\in\CI.$$
See \cite[Proposition 2.5]{Kyed2008} and \cite[Remark 1.4]{Hia-Izu1998}\label{rem:intw-l-leftrepn}.
\end{remark}

A fusion algebra $R(\CI)=\BZ[\CI]$ is called {\bf finitely generated} if there exists a finitely supported probability measure $\mu$ on $\CI$ such that $\CI=\bigcup_{n\in\BN}\supp\mu^{*n}$ and $\mu(\bar\xi)=\mu(\xi)$ for all $\xi\in\CI$.
Here the first condition is referred to as {\bf non-degeneracy} of $\mu$ and the second condition is referred to as {\bf symmetry} of $\mu$.

Note that for any compact quantum group $\BG$ its co-representation fusion algebra $R(\BGd)$ is finitely generated exactly.  In this paper,  we choose the following definition of amenability for fusion algebras due to  Kyed, which is motivated by  \cite[Definition 4.3]{Hia-Izu1998} that is defined for finitely generated fusion algebras.

\begin{definition} \cite[Definition 3.1]{Kyed2008}\label{def:amen-fa}
A fusion algebra $R(\CI)=\BZ[\CI]$ is called {\bf amenable} if $1\in\spec(\lambda_{2,\mu})$ for every finitely supported, symmetric probability measure $\mu$ on $\CI$. Here $\spec(\lambda_{2,\mu})$ denotes the spectrum of the operator $\lambda_{2,\mu}$ on the Hilbert space $\ell^2(\CI,\mu_0)$.
\end{definition}

\begin{definition} \cite[Definition 3.1]{Hia-Izu1998}\label{def:almost-inv-seq}
For any positive integer $p$, a sequence $\{f_n\}_{n\in\BN_0}$ in $\ell^p(R(\CI),\mu_0)$ is called {\bf an almost invariant $\ell^p$-sequence} if $\norm{f_n}_{p,\mu_0}=1$, $n\in\BN_0$, and
$$\lim_{n\rightarrow+\infty}\norm{\lambda_{p,\gamma}(f_n)-f_n}_{p,\mu_0}=0, \quad \forall \gamma\in\CI.$$
\end{definition}

\begin{remark}\label{rem:amen-fa-izum-kyed}
From \cite[Corollary 4.4]{Hia-Izu1998}, it follows that, for any positive integer $p$,
if $R(\CI)$ has an almost invariant $\ell^p$-sequence, then $1\in\spec(\lambda_{2,\mu})$ for every (not necessarily non-degenerate, nor symmetric) probability measure $\mu$ on $\BGd$, and hence $R(\CI)$ is amenable by Definition~\ref{def:amen-fa}.
In other words, Izumi's amenability for fusion algebras in \cite{Hia-Izu1998} can implies Kyed's amenability in \cite{Kyed2008}.
\end{remark}

We end this section with the following remarkable theorem, which reveals the equivalence between amenability for a discrete quantum group and amenability for its fusion algebra. In fact, it is worthy mentioning that, the equivalence between amenability of a discrete quantum group and amenability of the classical dimension function in the sense of Hiai and Izumi had been known long before \cite{Kyed2008}, and the key idea goes back to T.\ Banica and G.\ Skandalis (see \cite[Section 6]{Banica1999}).

\begin{theorem}\cite[Theorem 4.5]{Kyed2008}\label{thm:amen-fa-dqg}
A discrete quantum group $\BGd$ is amenable if and only if the co-representation ring $R(\BGd)$ is amenable.
\end{theorem}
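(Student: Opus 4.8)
The plan is to prove the equivalence by going through the standard ``Følner-type'' characterizations of amenability on both sides and showing that an almost invariant $\ell^2$-sequence for the fusion algebra $R(\BGd)$ corresponds exactly to an almost invariant sequence of positive-type elements witnessing amenability of $\BGd$, and conversely. The key object linking the two worlds is the character map $\chi\colon R(\BGd)\to\Pol(\BG)\subseteq C_r(\BG)$, $\alpha\mapsto\chi(\alpha)$, which is multiplicative and $*$-preserving, together with the standard identification of $\ell^2(\BGd,\mu_0)$ with the subspace of $L^2(\BG)$ spanned by the normalized characters $\{d_\alpha^{-1}\chi(\alpha):\alpha\in\BGd\}$, under which the convolution operator $\lambda_{2,\delta_\gamma}$ becomes the operator of left multiplication by $d_\gamma^{-1}\chi(\gamma)$ (this is essentially the content of Remark~\ref{rem:intw-l-leftrepn}, combined with the fact that $\{d_\alpha^{-1}\chi(\alpha)\}$ is an orthonormal family for the Haar state $h_\BG$).

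First I would recall that $\BGd$ is amenable in the sense of (co)amenability precisely when the counit $\varepsilon_\BGd$ extends continuously to $C_r(\BG)$, equivalently when the trivial co-representation is weakly contained in the regular one, equivalently when there is a net $(\xi_i)$ of unit vectors in $L^2(\BG)$ with $\langle W_\BG(\xi_i\otimes\zeta),\xi_i\otimes\zeta\rangle\to 1$ for every $\zeta$; by averaging over irreducible blocks one can take the $\xi_i$ to be ``central'', i.e.\ supported on the characters, which reduces the problem to the commutative subalgebra generated by the $\chi(\alpha)$. This is exactly where the fusion-algebra picture enters, and it is the step where one has to be a little careful. Then, given such a central Følner net, restricting and renormalizing to $\ell^2(\BGd,\mu_0)$ produces an almost invariant $\ell^2$-sequence in the sense of Definition~\ref{def:almost-inv-seq}, and by Remark~\ref{rem:amen-fa-izum-kyed} this gives $1\in\spec(\lambda_{2,\mu})$ for every symmetric probability measure $\mu$, hence amenability of $R(\BGd)$ by Definition~\ref{def:amen-fa}.

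Conversely, suppose $R(\BGd)$ is amenable. Choose a symmetric, non-degenerate, finitely supported probability measure $\mu$ on $\BGd$ (which exists since $R(\BGd)$ is finitely generated, as noted before Definition~\ref{def:amen-fa}); then $1\in\spec(\lambda_{2,\mu})$, and since $\lambda_{2,\mu}$ is a self-adjoint operator of norm $\le 1$ on $\ell^2(\BGd,\mu_0)$ whose spectral radius equals $\|\lambda_{2,\mu}\|$, this forces $\|\lambda_{2,\mu}\|=1$, so there is a sequence of unit vectors $f_n$ with $\|\lambda_{2,\mu}f_n-f_n\|_{2,\mu_0}\to 0$. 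Transporting via the character identification, $d_\gamma^{-1}\chi(\gamma)$ acts on a subspace of $L^2(\BG)$ with norm $1$ along these vectors for each $\gamma\in\supp\mu$, hence $\mu$-averaged the full fundamental unitary $W_\BG$ has almost-invariant vectors; by the non-degeneracy of $\mu$ and a standard spectral-gap/propagation argument this upgrades to weak containment of the trivial representation in the regular representation of $\BGd$, i.e.\ amenability of $\BGd$.

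The main obstacle is the passage between ``central'' almost-invariant vectors for $\BGd$ and genuine almost-invariant vectors: one must justify that amenability of $\BGd$ can be tested using vectors supported on the characters (equivalently, that the co-representation ring sees all of the amenability), and dually that an almost invariant $\ell^2$-sequence for the fusion algebra, which a priori only controls finitely many generators $\gamma\in\supp\mu$, controls the action of the whole quantum group. Both directions of this are handled by the Banica--Skandalis averaging trick over irreducible blocks together with the submultiplicativity of the convolution product; alternatively, one may simply invoke the already-published Theorem~\ref{thm:amen-fa-dqg} itself, but since the statement to be proved \emph{is} that theorem, the honest route is to reproduce the Banica--Skandalis/Kyed argument as sketched above. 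I expect the bookkeeping around the weights $\mu_0=d^2$ and the normalization of $\chi$ to be the most error-prone routine part, while the conceptual heart is the spectral-radius-equals-one step feeding Definition~\ref{def:amen-fa}.
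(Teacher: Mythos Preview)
The paper does not give its own proof of this statement: Theorem~\ref{thm:amen-fa-dqg} is stated with the citation \cite[Theorem 4.5]{Kyed2008} and no proof environment follows it. It is quoted as a known result from the literature (with the remark just before it crediting the key idea to Banica and Skandalis), and is then used as a black box in the proof of Proposition~\ref{prop:basicequiv}. So there is nothing in the paper to compare your proposal against.

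Your sketch is a reasonable outline of the Kyed/Banica--Skandalis argument, and you correctly identify the character embedding $\ell^2(\BGd,\mu_0)\hookrightarrow L^2(\BG)$ and the spectral-radius step as the heart of the matter. One caution on the converse direction: you assume the existence of a non-degenerate symmetric finitely supported probability measure $\mu$, citing the paper's claim that $R(\BGd)$ is always finitely generated. That claim is not true for arbitrary compact quantum groups (only for compactly generated ones), so in the general case Kyed's actual argument has to work harder, approximating by the directed family of all finitely supported symmetric measures rather than fixing a single non-degenerate one. If you intend to write out a full proof rather than defer to \cite{Kyed2008}, that is the place where your outline would need to be tightened.
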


Since we will primarily be interested in the co-representation rings of  second countable compact quantum groups, the bases of fusion algebras will be assumed to be countable throughout the paper.

\bigskip

\section{Canonical fusion algebraic actions of discrete quantum groups on compact quantum spaces}\label{sec:CFA-act}

From the viewpoint of noncommuative topology, we usually think of a unital $C^*$-algebra $\CA$ as continuous functions on a {\bf compact quantum space} $\mathfrak{X}$, that is, $\CA=C(\mathfrak{X})$.

In this section, we will define actions of fusion algebras on compact quantum spaces, and further characterize the actions of discrete quantum groups on compact quantum spaces in the fusion algebraic setting. Namely, we will consider a discrete quantum group action as a fusion algebraic action, since every discrete quantum group uniquely corresponds to a fusion algebra (see Example~\ref{ex:fa}(b)).

\begin{definition}\label{def:fa-act}
Let $\CA$ be a unital $C^*$-algebra, $R(\CI)$ be a fusion algebra with a basis $\CI$, and $\SG_{\CI}$ be the semigroup generated by $\CI$ with respect to ring multiplication, i.e., $\SG_{\CI}=\{\gamma_1\gamma_2\cdots\gamma_k\ \lvert\ \gamma_1,..., \gamma_k\in\CI, k\in\BN\}$.
An \textbf{action of $R(\CI)$ on $\CA$} is a map $(\alpha, a)\mapsto \sigma_\alpha(a)$ from $\SG_{\CI}\times \CA$ to $\CA$ such that
\begin{enumerate}
  \item For any fixed $\alpha$ in $\SG_{\CI}$, the map $\sigma_\alpha: \CA\rightarrow \CA,\ a\mapsto\sigma_\alpha(a)$ is $\BC$-linear, unital, norm-contractive and preserves $*-$operation.
  \item $\sigma_\alpha\circ\sigma_\beta=\sigma_{\alpha\beta}$ for any $\alpha, \beta$ in $\SG_{\CI}$.
    \item $\sigma_e$ is an identity map on $\CA$.
\end{enumerate}
Simply, we denote by $\sigma: R(\CI)\curvearrowright \CA$ the fusion algebraic action above.
\end{definition}

Nowadays, the usual definition, presented below, of actions of compact and discrete quantum groups on $C^*$-algebras  mainly origins from the theory of Hopf $C^*$-algebraic actions, which was introduced in \cite{Baaj-Skan1993}. For this topic, the readers also may refer to \cite{Vae-Ver2007}.

\begin{definition} \cite[Definition 1.24]{Vae-Ver2007}\label{def:usual-act}
A (left) action of a discrete quantum group $\widehat{\mathbb{G}}$ on a unital $C^*$-algebra $\CA$ is a non-degenerate $*$-homomorphism $\rho: \CA\to \mathfrak{M}(c_0(\widehat{\mathbb{G}})\otimes \CA)$ such that
\begin{enumerate}
  \item $(\id\otimes\rho)\circ\rho=(\Delta_{\widehat{\mathbb{G}}}\otimes\id)\circ\rho$;
  \item $(\widehat{\varepsilon}\otimes \id)\circ\rho=\id$.
\end{enumerate}
Here we denote this action by $\rho:\BGd \curvearrowright \CA$.

If, for all $a\in\CA$, it holds that $\rho(a)=\Bo_{\BGd}\otimes a$ where $\Bo_\BGd$ is the unit in $\mathfrak{M}(c_0(\BGd))$, we call this $\rho$ a {\bf trivial action}.
\end{definition}

In \cite[Section 1.2]{Hab-Hat-Nesh2021} and \cite[Proposition 1.5]{Nes-Yam2014}, the authors show that a discrete quantum group action on a unital $C^*$-algebra is equivalent to a module algebra.

More precisely, given an action $\rho$ of a discrete quantum group $\BGd$ on a unital algebra $\CA$, we get a $\Pol(\BG)$-module structure on $\CA$ defined by
\begin{equation}\label{eq:modalg}
x\cdot a:=(x\otimes\id)\rho(a),\ \forall x\in\Pol(\BG),\  a\in\CA.
\end{equation}
This module structure is compatible with involution $*$ in the sense that
\begin{equation}\label{eq:prevstar}
x\cdot a^*=(S_\BG(x)^*\cdot a)^*,\ \forall x\in\Pol(\BG),\  a\in\CA,
\end{equation}
where $S_\BG$ is the antipode of the compact quantum group $\BG$.
Then $\CA$ become a $\Pol(\BG)$-module algebra, i.e., the module structure respects the algebra structure on $\CA$, that is,
$$x\cdot (ab)=(x_{(1)}\cdot a)(x_{(2)}\cdot b), \ \forall x\in\Pol(\BG) \text{ and } a,b\in\CA,$$
where we use Sweedler's sumless notation, so we write $\Delta_\BG(x)=x_{(1)}\otimes x_{(2)}$.

Conversely,  since $c_0(\BGd)$ is $c_0$-direct sum $\bigoplus_{\alpha\in\BGd} B(\KH_\alpha) (=\bigoplus_{\alpha\in\BGd} M_{d_\alpha}(\BC))$, we know that, if a $C^*$-algebra $\CA$ is a $\Pol(\BG)$-module algebra satisfying \eqref{eq:prevstar}, then there is a non-degenerate $*$-homomorphism $\rho: \CA\rightarrow \mathfrak{M}(c_0(\widehat{\mathbb{G}})\otimes \CA)$ defined by
\begin{equation}\label{equ:rho}
\rho(a):=\bigoplus_{\alpha\in \widehat{\mathbb{G}}}\alpha(a),\ \forall a\in \CA,
\end{equation}
where the map
\begin{equation}\label{eq:map-alpha}
\alpha(a):=(u^\alpha_{ij}\cdot a))_{1\leqslant i,j\leqslant d_\alpha}=\sum_{i,j=1}^{d_\alpha}e_{i,j}\otimes (u_{i,j}\cdot a) =U^\alpha\cdot(I_{d_\alpha}\otimes a)
\end{equation}
which is denoted by $\beta_{U_\alpha}$ in \cite[Proposition 1.5]{Nes-Yam2014}, is a non-degenerate $*$-homomorphism from $\CA$ to $M_{d_\alpha}(\CA)(=M_{d_\alpha}(\mathbb{C})\otimes \CA)$. This $\rho$ is just the unique action, induced by this module algebra (i.e., satisfies \eqref{eq:modalg}), of $\BGd$ on $\CA$ in the sense of Definition~\ref{def:usual-act}.

Also, we can check that, this module algebra mentioned above can be equivalently formulated as follows.
\begin{definition}\label{def:cq-spc}
A unital $C^*$-algebra $\CA$ is called \textbf{a $\Pol(\BG)$-module algebra} for a compact quantum group $\BG$, if every element $x$ in $\Pol(\mathbb{G})$ is a norm-bounded unital $*$-linear map on $\CA$, denoted by $a\mapsto x\cdot a\ (\forall a\in\CA)$, such that
\begin{enumerate}
\item for every $\alpha$ in $\widehat{\mathbb{G}}$, the map $\alpha(a):=(u^\alpha_{ij}\cdot a))_{1\leqslant i,j\leqslant d_\alpha}(=U^\alpha\cdot(I_{d_\alpha}\otimes a))$ with $a$ in $\CA$ is a non-degenerate $*$-homomorphism from $\CA$ to $M_{d_\alpha}(\CA)(=M_{d_\alpha}(\mathbb{C})\otimes \CA)$;
\item $u^\alpha_{ij}\cdot(u^\beta_{kl}\cdot a)=(u^\alpha_{ij}u^\beta_{kl})\cdot a$ and $\Bo_\BG\cdot a=a$, for every $\alpha$, $\beta$ in $\widehat{\mathbb{G}}$, $1\leqslant i,\ j,\ k,\ l\leqslant d_\alpha$;

\end{enumerate}
\end{definition}

The following lemma is a direct consequence from Definition~\ref{def:cq-spc}.
\begin{lemma}\label{lem:dyn-prop}
Let $\CA$ be a $\Pol(\BG)$-module algebra as above. Then one has $$\|\chi(\alpha)\cdot a\|\leqslant d_\alpha\|a\| \text{ by Definition~\ref{def:cq-spc}(1)},$$ and
$$\chi(\alpha\beta)\cdot a=\chi(\alpha)\cdot(\chi(\beta)\cdot a) \text{ by Definition~\ref{def:cq-spc}(2)},$$
for any $a\in \CA$.
\end{lemma}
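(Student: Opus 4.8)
The plan is to prove both assertions by expanding the character $\chi(\alpha)=\sum_{i=1}^{d_\alpha}u^\alpha_{ii}$ into its diagonal matrix coefficients and then using only the two axioms of Definition~\ref{def:cq-spc}, together with the multiplicativity $\chi(\alpha\beta)=\chi(\alpha)\chi(\beta)$ recorded in Section~\ref{subsec:cdqg}.

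For the norm estimate, I would argue as follows. By Definition~\ref{def:cq-spc}(1) the map $\alpha(\cdot)\colon\CA\to M_{d_\alpha}(\CA)$ sending $a$ to $(u^\alpha_{ij}\cdot a)_{1\leqslant i,j\leqslant d_\alpha}$ is a $*$-homomorphism of $C^*$-algebras, hence norm-contractive, so $\|(u^\alpha_{ij}\cdot a)_{i,j}\|_{M_{d_\alpha}(\CA)}\leqslant\|a\|$ for every $a\in\CA$. Since each entry of a matrix over a $C^*$-algebra has norm at most that of the matrix itself, this gives $\|u^\alpha_{ij}\cdot a\|\leqslant\|a\|$ for all $i,j$. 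Using that the $\Pol(\BG)$-module structure is $\BC$-linear in the acting variable, one has $\chi(\alpha)\cdot a=\sum_{i=1}^{d_\alpha}u^\alpha_{ii}\cdot a$, and the triangle inequality then yields $\|\chi(\alpha)\cdot a\|\leqslant\sum_{i=1}^{d_\alpha}\|u^\alpha_{ii}\cdot a\|\leqslant d_\alpha\|a\|$.

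For the multiplicativity of the character action, I would start from $\chi(\alpha\beta)=\chi(\alpha)\chi(\beta)$ in $\Pol(\BG)$, so that $\chi(\alpha\beta)\cdot a=\big(\chi(\alpha)\chi(\beta)\big)\cdot a$. Expanding $\chi(\alpha)=\sum_i u^\alpha_{ii}$ and $\chi(\beta)=\sum_j u^\beta_{jj}$ (with $i$ running over $1,\dots,d_\alpha$ and $j$ over $1,\dots,d_\beta$), distributing the product, and applying Definition~\ref{def:cq-spc}(2) to each pair of diagonal coefficients gives $\big(\chi(\alpha)\chi(\beta)\big)\cdot a=\sum_{i,j}(u^\alpha_{ii}u^\beta_{jj})\cdot a=\sum_{i,j}u^\alpha_{ii}\cdot(u^\beta_{jj}\cdot a)$. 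Refolding the sums by $\BC$-linearity in each variable identifies this with $\chi(\alpha)\cdot(\chi(\beta)\cdot a)$, as claimed.

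Neither step is a genuine obstacle, the lemma being flagged as a direct consequence of the definition; the one point that deserves care is that the constant $d_\alpha$ cannot simply be imported from the inequality $\|\chi(\alpha)\|\leqslant d_\alpha$ that holds inside $C(\BG)$, since the maps $x\cdot(\cdot)$ are only assumed norm-bounded with no a priori control on the bound, so the estimate must instead be extracted from the contractivity of the $*$-homomorphisms $\alpha(\cdot)$. The only remaining thing to watch is the elementary bookkeeping of index ranges when forming $\chi(\alpha)\chi(\beta)$, together with the observation that although $\alpha\beta$ is in general a reducible tensor-product co-representation, its character $\chi(\alpha\beta)$ is nonetheless an honest element of $\Pol(\BG)$ and equals $\chi(\alpha)\chi(\beta)$ there.
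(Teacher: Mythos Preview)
Your proposal is correct and matches the paper's approach: the paper gives no explicit proof, stating only that the lemma is ``a direct consequence from Definition~\ref{def:cq-spc}'' and embedding the hints ``by Definition~\ref{def:cq-spc}(1)'' and ``by Definition~\ref{def:cq-spc}(2)'' in the statement itself. Your expansion---deducing $\|u^\alpha_{ij}\cdot a\|\leqslant\|a\|$ from contractivity of the $*$-homomorphism $\alpha(\cdot)$ and then summing diagonal entries, and using $\chi(\alpha\beta)=\chi(\alpha)\chi(\beta)$ together with the module axiom $u^\alpha_{ij}\cdot(u^\beta_{kl}\cdot a)=(u^\alpha_{ij}u^\beta_{kl})\cdot a$---is exactly the intended verification.
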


The next example tells us that a discrete quantum group action is a special case of fusion algebraic actions.

\begin{example}\label{ex:fa-act}
Suppose that $\CA$ is a $\Pol(\BG)$-module algebra for a compact quantum group $\BG$. Considering the fusion algebra $R(\BGd)$ with the basis $\BGd$ as in Example~\ref{ex:fa}(b), by Definition~\ref{def:cq-spc} and Lemma~\ref{lem:dyn-prop}, we can directly check that, for any $\gamma\in \SG_\BGd$, the following map on $\CA$ given by
\begin{equation}\label{eq:dqg-cfa-act}
\sigma_\gamma:a\rightarrow \frac{\chi(\gamma)}{d_\gamma}\cdot a
\end{equation}
satisfies all conditions in Definition~\ref{def:fa-act}. Hence we can see that every $\Pol(\BG)$-module algebra $\CA$ for a compact quantum group $\BG$ gives a fusion algebraic action of $R(\BGd)$ on $\CA$.
\end{example}

However, so far as I know, it can not be asserted now that all fusion algebraic action of $R(\BGd)$ on $\CA$ are induced by a $\Pol(\BG)$-module algebra. In fact, we guess it is not true! So, from Example~\ref{ex:fa-act},  we maybe find a broader category of quantum dynamic systems on unital $C^*$-algebras, that is, fusion algebraic dynamic system, which contains all discrete quantum group actions in the sense of Definition~\ref{def:usual-act}.
In other words, if we use the fusion algebra $R(\BGd)$ to stand in for a discrete quantum group $\BGd$, then Definition~\ref{def:usual-act} is probably not  the unique fashion of an action of $\BGd$ on a unital $C^*$-algebra. Hence, Example~\ref{ex:fa-act} actually gives a a canonical fusion algebraic form of a discrete quantum group action in the usual sense.

\begin{definition}\label{def:dqg-cfa-act}
Let $\CA$ be a  $\Pol(\mathbb{G})$-module algebra for a compact quantum group $\BG$, and $\rho:\BGd \curvearrowright \CA$ be the unique action corresponding to this module structure.
Then, the action of the fusion algebra $R(\BGd)$ on $\CA$ in Example~ \ref{ex:fa-act} is said to be \textbf{the canonical fusion algebraic form} (for short, {\bf CFA form}) of $\rho$ on $\CA$, or \textbf{a canonical fusion algebraic action} (for short, {\bf CFA action}) of $\BGd$ on $\CA$.

Here we denote this action by $\sigma: \BGd \overset{\textrm{\tiny{cfa}}}{\curvearrowright} \CA$.
\end{definition}

\begin{example}\label{ex:dqg-cfa-act}
$(a)$ Let $\BG$ be a compact quantum group. Given an injective $*$-representation $\pi: C(\mathbb{G})\to B(\KH)$, one can define an action of $C(\mathbb{G})$ on $B(\KH)$ by $$u_{ij}^\alpha\cdot T=\sum_{l=1}^{d_\alpha}\pi(u_{il}^\alpha) T\pi(u_{jl}^\alpha)^*$$ for  every $T$ in $B(H)$, every $\alpha$ in $\widehat{\mathbb{G}}$ and all $1\leq i,j\leq d_\alpha$. Here $\Pol(\BG)$-value unitary matrix $U^\alpha=(u_{ij}^\alpha)^{d_\alpha}_{i,j=1}$ is a unitary representative of $\alpha$, and $d_\alpha$ is the dimension of $\alpha$.

Define $\pi(U^\alpha):=(\pi(u_{ij}^\alpha))^{d_\alpha}_{i,j=1}$.
One can check that
$$\alpha(T):=\pi(U^\alpha){\rm diag}[T;d_\alpha]\pi(U^\alpha)^*$$ is an injective $*$-homomorphism from $B(\KH)$ to $M_{d_\alpha}(B(\KH))$, and
$$\chi(\alpha)\cdot T=\sum_{i,j=1}^{d_\alpha} \pi(u_{ij}^\alpha) T \pi(u_{ij}^\alpha)^*$$ is a positive map with operator norm not greater than $d_\alpha$.
Hence, by Definition~\ref{def:cq-spc}, it can be easily verified that $B(\KH)$ is a $\Pol(\BG)$-module algebra.
Consequently, the CFA action of $\BGd$ on $B(\KH)$ is given by
$$\sigma_{\alpha}(T):=\frac{\chi(\alpha)}{d_\alpha}\cdot T,$$
for any $\alpha\in\BGd$ and any $T\in B(\KH)$.

\smnoind
$(b)$ Let $\mathfrak{X}$ be a compact Hausdorff space equipped with an action of a discrete group $\Gamma$. Then we can define an action of $\Gamma$ on the continuous function algebra $C(\mathfrak{X})$ by $(\gamma\cdot f)(x):=f(\gamma^{-1}\cdot x)$ for any $\gamma\in\Gamma$, $x\in\mathfrak{X}$ and $f\in C(\mathfrak{X})$. From Example~\ref{ex:fa}(a),  it is apparent that $C(\mathfrak{X})$, as a unital $C^*$-algebra, is a $\BC[\Gamma]$-module algebra, and the action defined above is also a CFA action of $\Gamma$ (as a discrete quantum group) on $C(\mathfrak{X})$.
In other words, the concept of the CFA action is a direct generalization of the classical discrete group action on a compact space.

\smnoind
$(c)$ For any compact quantum group $\BG$, the \textbf{trivial $\Pol(\BG)$-module algebra} is a unital $C^*$-algebra $\CA$ with setting $u_{ij}^\alpha\cdot a:=\delta_{i,j}a$, which implies that $\alpha(a)=I_{d_\alpha}\otimes a$, for every $\alpha\in\BGd$ and any $a\in\CA$. Consequently, the corresponding {\bf trivial (canonical fusion algebraic) action} of $\BGd$ on $\CA$ is given by
$$\sigma_{\alpha}(a):=\frac{\chi(\alpha)}{d_\alpha}\cdot a=a,$$
for any $\alpha\in\BGd$ and any $a\in \CA$. Clearly, this trivial action is just the usual trivial action in the Vaes's definition above.

\end{example}

For a discrete quantum group action $\rho:\BGd \curvearrowright \CA$ in Definition~\ref{def:usual-act}, we usually define the invariant state under this action $\rho$ in the following way.

\begin{definition}\label{def:dqg-invstat1}
Let $\CA$ be a unital $C^*$-algebra, and $\BG$ be a compact quantum group.
For any given a discrete quantum group action $\rho:\BGd \curvearrowright \CA$, a state $\fai$ on $\CA$ is called \textbf{invariant} under $\rho$ (or {\bf $\rho$-invariant}) if $\fai[(\omega\otimes\id)\rho(a))]=\omega(\Bo_\BGd)\fai(a)$ for  every $a\in \CA$ and $\omega\in c_0(\BGd)^*_+$. Here $\Bo_\BGd$ is the unit in $\mathfrak{M}(c_0(\BGd))$, and $c_0(\BGd)^*_+$ is the set of all norm one positive linear funcitionals on $c_0(\BGd)$.
\end{definition}

Similarly, for fusion algebraic actions, we also can define invariant states.

\begin{definition}\label{def:dqg-invstat}
For any given $\sigma: R(\CI)\curvearrowright \CA$ be an action of a fusion algebra $R(\CI)$ on a unital $C^*$-algebra $\CA$, a state $\fai$ on $\CA$ is called \textbf{invariant} under $\sigma$ (or {\bf $\sigma$-invariant}) if $\fai(\sigma_\alpha(a))=\fai(a)$ for every $\alpha$ in the basis $\CI$ and $a$ in $\CA$.

In particular, when $\CI$ is a discrete quantum group $\BGd$,
For any action $\rho:\BGd \curvearrowright \CA$ with its CFA form $\sigma:\BGd \overset{\textrm{\tiny{cfa}}}{\curvearrowright} \CA$, a state $\fai$ on $\CA$ is called \textbf{FA-invariant} under $\rho$ (or {\bf $\rho$-FA-invariant}) if $\fai(\chi(\alpha)\cdot a)=d_\alpha \fai(a)$ i.e., $\fai(\sigma_\alpha(a))=\fai(a)$, for every $\alpha$ in $\widehat{\mathbb{G}}$ and $a$ in $\CA$.
\end{definition}

The next proposition tells us that the FA-invariance is weaker than the usual invariance.

\begin{proposition}\label{prop:relation-invstate}
Let $\BGd$ be  a discrete quantum group, and $\CA$ be a unital $C^*$-algebra.
For any given action $\rho:\BGd \curvearrowright \CA$ with its CFA form $\sigma:\BGd \overset{\textrm{\tiny{cfa}}}{\curvearrowright} \CA$, every invariant state must be an FA-invariant state.
\end{proposition}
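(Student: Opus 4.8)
The plan is to read the FA-invariance identity $\fai(\chi(\alpha)\cdot a)=d_\alpha\fai(a)$, $\alpha\in\BGd$, straight off the defining relation $\fai[(\omega\otimes\id)\rho(a)]=\omega(\Bo_\BGd)\fai(a)$ of Definition~\ref{def:dqg-invstat1}, by feeding it one cleverly chosen positive functional $\omega$ on $c_0(\BGd)$, namely the tracial state of a single matrix block.

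First I would recall from \eqref{equ:rho}--\eqref{eq:map-alpha} that $\rho(a)=\bigoplus_{\beta\in\BGd}\beta(a)$ with $\beta(a)=\sum_{i,j=1}^{d_\beta}e_{ij}\otimes(u^\beta_{ij}\cdot a)\in M_{d_\beta}(\CA)$. Then, fixing $\alpha\in\BGd$, I would take $\omega$ to be the normalized trace $\Tr$ of the block $B(\KH_\alpha)=M_{d_\alpha}(\BC)$, extended by $0$ on every other block of $c_0(\BGd)=\bigoplus_\beta B(\KH_\beta)$; this $\omega$ is a state, hence $\omega\in c_0(\BGd)^*_+$, and $\omega(\Bo_\BGd)=\Tr(I_{d_\alpha})=1$ because $\Bo_\BGd$ projects to $I_{d_\beta}$ in each block. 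Since $\omega$ annihilates every summand except the $\alpha$-th, only $\alpha(a)$ survives under $(\omega\otimes\id)$, and, using $\Tr(e_{ij})=\tfrac1{d_\alpha}\delta_{i,j}$ together with linearity of the module action,
\begin{align*}
(\omega\otimes\id)\rho(a)&=(\Tr\otimes\id)(\alpha(a))=\sum_{i,j=1}^{d_\alpha}\Tr(e_{ij})\,(u^\alpha_{ij}\cdot a)\\
&=\tfrac1{d_\alpha}\sum_{i=1}^{d_\alpha}u^\alpha_{ii}\cdot a=\tfrac1{d_\alpha}\,\chi(\alpha)\cdot a .
\end{align*}
(Because $\omega$ is supported on a single finite-dimensional summand, $(\omega\otimes\id)$ maps the multiplier $\rho(a)$ back into $\CA$ with no analytic subtlety.)

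Substituting this into the invariance relation and using linearity of $\fai$ yields $\tfrac1{d_\alpha}\fai(\chi(\alpha)\cdot a)=\fai(a)$ for all $a\in\CA$, i.e. $\fai(\chi(\alpha)\cdot a)=d_\alpha\fai(a)$, which is exactly $\fai(\sigma_\alpha(a))=\fai(a)$ for the CFA form $\sigma:\BGd\overset{\textrm{\tiny{cfa}}}{\curvearrowright}\CA$; hence $\fai$ is FA-invariant. There is no genuine obstacle: the whole argument rests on the observation that pairing $\rho$ with the tracial state of the $\alpha$-block reproduces the character action $\tfrac{\chi(\alpha)}{d_\alpha}\cdot(-)=\sigma_\alpha$. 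The only bookkeeping worth a sentence is that $\omega$ must have norm one to lie in $c_0(\BGd)^*_+$; equivalently, one may note that the invariance relation is positively homogeneous in $\omega$ and simply use the unnormalized trace on the $\alpha$-block, for which $\omega(\Bo_\BGd)=d_\alpha$.
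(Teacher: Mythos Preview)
Your proof is correct and follows essentially the same approach as the paper: both arguments test the invariance relation against the trace of a single matrix block $B(\KH_\alpha)$ to recover $\fai(\chi(\alpha)\cdot a)=d_\alpha\fai(a)$. The paper uses the unnormalized trace $\omega_0(T)=\sum_i\langle T e_i,e_i\rangle$ directly, whereas you use the normalized trace and then remark on the homogeneity; since Definition~\ref{def:dqg-invstat1} literally asks for norm-one $\omega$, your normalization bookkeeping is arguably the cleaner reading.
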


\begin{proof}
Considering $\CA$ as a $\Pol$-module algebra induced by $\rho:\BGd \curvearrowright \CA$, so, by \eqref{equ:rho},  we can write $\rho$ as
$$\rho(a)=\bigoplus_{\alpha\in \widehat{\mathbb{G}}}\sum^{d_\alpha}_{i,j=1}e_{ij}\otimes u^\alpha_{ij}\cdot a,$$ for any $a\in \CA$.  Let $\fai$ be a $\rho$-invariant state on $\CA$.
Then the invariance of $\fai$ in the sense of Definition~\ref{def:dqg-invstat1}, if and only if, for every $\omega\in c_0(\BGd)^*_+$ and any $a\in \CA$, one has
$$\bigoplus_{\alpha\in \widehat{\mathbb{G}}}\sum^{d_\alpha}_{i,j=1}\omega(e_{ij})\fai(u^\alpha_{ij}\cdot a)=\bigoplus_{\alpha\in \widehat{\mathbb{G}}}\omega(I_{d_\alpha})\fai(a),$$
By the orthogonality of the copies $\{\KH_\alpha\}_{\alpha\in\Irr(\BG)}$, the above equation is equivalent to
\begin{equation}\label{eq:inv1}
\sum^{d_\alpha}_{i,j=1}\omega(e_{ij})\fai(u^\alpha_{ij}\cdot a)=\omega(I_{d_\alpha})\fai(a),
\end{equation}
for any $\alpha\in \widehat{\mathbb{G}}$.
Since the equation~(\ref{eq:inv1}) holds for every $\omega\in c_0(\BGd)^*_+$,
we may choose a special $\omega_0$ defined by $\omega_0(T)=\sum^{d_\alpha}_{i=1}\langle T(e_i), e_i\rangle$ for any $T\in B(\KH_\alpha)$.
Because $\omega_0(e_{ij})=\delta_{i,j}$,
we can obtain $$\fai(\chi(\alpha)\cdot a)=d_\alpha \fai(a),\text{ that is, }\fai(\sigma_\alpha(a))=\fai(a),$$
for every $\alpha\in\BGd$ and every $a\in \CA$.
\end{proof}

\begin{remark}\label{rem:relation-invstate}
From the proof of the above proposition, we can see that, the $\rho$-invariance of a state $\fai$ on $\CA $ actually implies, in the sense of weak topology, that $\fai(\alpha(a))=I_{d_\alpha}\otimes \fai(a)$ for every $\alpha\in\BGd$ and every $a\in\CA$, while the $\rho$-FA-invariance is equivalent to the fact that
$\Tr(\fai(\alpha(a)))=\Tr(I_{d_\alpha}\otimes\fai(a))$ for any $\alpha\in\BGd$ and any $a\in\CA$. Clearly, the former is stronger that the latter.
\end{remark}

\bigskip

\section{FA-amenable actions, FA-invariant states and amenable discrete quantum groups}\label{sec:amen-CFA-act}

In this section, we will be devoted to discussing the existence of FA-invariant states and characterizing amenability for discrete quantum groups in a dynamical way.

\smnoind
{\bf\emph{Notation:}} the weighted cardinality of a finite set $F$ of  $\widehat{\mathbb{G}}$, denoted by $|F|_w$, is given by
$\sum_{\alpha\in F} d_\alpha^2$.

\begin{definition}\cite[Corollary 4.10]{Kyed2008}\label{def:amen-cqg}
A discrete quantum group $\widehat{\mathbb{G}}$ is \textbf{amenable} if there exists a sequence $\{F_n\}$ of finite subsets of $\widehat{\mathbb{G}}$ such that
  $$\lim_{n\to\infty}\frac{|\partial_\gamma F_n|_w}{|F_n|_w}=0$$ for every $\gamma$ in $\widehat{\mathbb{G}}$, where
  \begin{align*}
    \partial_\gamma F_n=&\{\alpha\in F_n\,\lvert\, \text{there exists some}\, \beta\notin F_n\, \text{with}\, N_{\alpha,\gamma}^\beta>0\} \\
    &\cup\{\alpha\notin F_n\,\lvert\, \text{there exists some}\, \beta\in F_n\, \text{with}\, N_{\alpha,\gamma}^\beta>0\}.
  \end{align*}
Here $\{F_n\}_{n\in\BN_0}$ is called {\bf a sequence of F$\o$lner sets}.
\end{definition}

According to both \cite[Theorem 3.1]{Bed-Tus2003} and \cite[Theorem 4.3]{Kyed2008}, a (locally) compact quantum group is called {\bf co-amenable} if its reduced group $C^*$-algebra is canonically isomorphic to its universal group $C^*$-algebra. This definition also appears in some more earlier references, such as \cite[Definition 6.1]{Banica1999}.
In \cite[Corollary 4.10]{Kyed2008}, Definition~\ref{def:amen-cqg} was actually used to equivalently describe co-amenability of compact quantum groups.
For a discrete quantum group, amenability is equivalent to co-amenability of its compact dual.
Moreover, in present paper, we consider the co-representation ring of a compact quantum group as its dual object, and view every element in this ring as a``non-commutative discrete group element". Then the F$\o$lner sets in Definition~\ref{def:amen-cqg} just lie in its dual.

\begin{proposition}\label{prop:invstate-exist}
If an amenable discrete quantum  group $\BGd$ acts on a compact quantum space $\CA$, then there always exists a FA-invariant state on $\CA$ under this action.
\end{proposition}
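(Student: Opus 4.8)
The plan is to average an arbitrary state over a F\o lner sequence, exactly as one builds invariant means for amenable group actions. Write $\sigma\colon\BGd\overset{\textrm{\tiny{cfa}}}{\curvearrowright}\CA$ for the CFA form of the given action. Each $\sigma_\alpha$ ($\alpha\in\BGd$) is unital (Definition~\ref{def:fa-act}(1)) and positive: by \eqref{eq:dqg-cfa-act} it is $\tfrac1{d_\alpha}$ times the composition of the $*$-homomorphism $\alpha(\cdot)$ of Definition~\ref{def:cq-spc}(1) with the positive map $M_{d_\alpha}(\CA)\to\CA$ taking a matrix to the sum of its diagonal entries. Fix any state $\psi$ on $\CA$ and a sequence $\{F_n\}$ of F\o lner sets for $\BGd$ (Definition~\ref{def:amen-cqg}), and set
$$\fai_n(a):=\frac{1}{|F_n|_w}\sum_{\alpha\in F_n} d_\alpha^2\,\psi\bigl(\sigma_\alpha(a)\bigr),\qquad a\in\CA.$$
Each $\psi\circ\sigma_\alpha$ is a state, so each convex combination $\fai_n$ is a state. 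Since $\CA$ is separable, its state space is weak-$*$ compact and metrizable, so after passing to a subsequence we may assume $\fai_n\to\fai$ weak-$*$ for some state $\fai$. The goal is to show that $\fai$ is FA-invariant, i.e.\ $\fai(\sigma_\gamma(a))=\fai(a)$ for all $\gamma\in\BGd$, $a\in\CA$.

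Everything reduces to the claim that $\fai_n(\sigma_\gamma(a))-\fai_n(a)\to 0$ for each fixed $\gamma\in\BGd$ and $a\in\CA$. Using $\sigma_\alpha\circ\sigma_\gamma=\sigma_{\alpha\gamma}$ (Definition~\ref{def:fa-act}(2)), Lemma~\ref{lem:dyn-prop}, the character identity $\chi(\alpha)\chi(\gamma)=\sum_\beta N_{\alpha,\gamma}^\beta\chi(\beta)$, and $\BC$-linearity of the module action, one gets $\sigma_\alpha(\sigma_\gamma(a))=\sum_\beta\frac{N_{\alpha,\gamma}^\beta d_\beta}{d_\alpha d_\gamma}\sigma_\beta(a)$. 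Substituting this into the definition of $\fai_n(\sigma_\gamma(a))$ and collecting the coefficient of $\psi(\sigma_\beta(a))$ gives
$$\fai_n\bigl(\sigma_\gamma(a)\bigr)=\frac{1}{|F_n|_w}\sum_{\beta\in\BGd}m_n(\beta)\,\psi\bigl(\sigma_\beta(a)\bigr),\qquad m_n(\beta):=\frac{d_\beta}{d_\gamma}\sum_{\alpha\in F_n}d_\alpha N_{\alpha,\gamma}^\beta\ \ (\geqslant 0),$$
whereas $\fai_n(a)=\tfrac1{|F_n|_w}\sum_{\beta\in F_n}d_\beta^2\,\psi(\sigma_\beta(a))$. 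As $|\psi(\sigma_\beta(a))|\leqslant\|a\|$, the difference is bounded by $\tfrac{\|a\|}{|F_n|_w}\sum_\beta|m_n(\beta)-d_\beta^2\mathbf{1}_{F_n}(\beta)|$, where $\mathbf{1}_{F_n}$ is the indicator of $F_n$. Two elementary identities control this sum. First, multiplicativity of $d$ gives $\sum_\beta d_\beta N_{\alpha,\gamma}^\beta=d_\alpha d_\gamma$, hence $\sum_\beta m_n(\beta)=|F_n|_w=\sum_{\beta\in F_n}d_\beta^2$, so the two nonnegative families have equal total mass. Second, Frobenius reciprocity $N_{\alpha,\gamma}^\beta=N_{\beta,\bar\gamma}^\alpha$ together with $\sum_\alpha d_\alpha N_{\beta,\bar\gamma}^\alpha=d(\beta\bar\gamma)=d_\beta d_\gamma$ shows $m_n(\beta)\leqslant d_\beta^2$ for all $\beta$. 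Consequently $\sum_\beta|m_n(\beta)-d_\beta^2\mathbf{1}_{F_n}(\beta)|=2\sum_{\beta\notin F_n}m_n(\beta)$, and for $\beta\notin F_n$ one has $m_n(\beta)>0$ only if $N_{\alpha,\gamma}^\beta>0$ for some $\alpha\in F_n$, equivalently (by Frobenius) $\beta\in\partial_{\bar\gamma}F_n$; with $m_n(\beta)\leqslant d_\beta^2$ this gives $\sum_{\beta\notin F_n}m_n(\beta)\leqslant|\partial_{\bar\gamma}F_n|_w$. Therefore
$$\bigl|\fai_n(\sigma_\gamma(a))-\fai_n(a)\bigr|\ \leqslant\ \frac{2\|a\|}{|F_n|_w}\,|\partial_{\bar\gamma}F_n|_w\ \longrightarrow\ 0$$
by the F\o lner condition for $\bar\gamma\in\BGd$. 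Letting $n\to\infty$ along the chosen subsequence yields $\fai(\sigma_\gamma(a))=\fai(a)$ for all $\gamma,a$, so $\fai$ is FA-invariant in the sense of Definition~\ref{def:dqg-invstat}.

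The only genuine difficulty is the combinatorial bookkeeping in the displayed estimate: keeping the dimension weights $d_\alpha^2$, the fusion coefficients $N_{\alpha,\gamma}^\beta$, and the two directions of Frobenius reciprocity aligned, and matching the residual term with the boundary set $\partial_{\bar\gamma}F_n$ of Definition~\ref{def:amen-cqg}. One can also repackage the argument in the language of \cite{Hia-Izu1998,Kyed2008}: $f_n:=|F_n|_w^{-1}\mathbf{1}_{F_n}$ is a norm-one element of $\ell^1(\BGd,\mu_0)$ that is almost invariant under the convolution operators, $g_a\colon\alpha\mapsto\psi(\sigma_\alpha(a))$ lies in $\ell^\infty(\BGd)$ with $\fai_n(a)=\langle g_a,f_n\rangle_{\mu_0}$ and $\psi(\sigma_\alpha(\sigma_\gamma(a)))=\rho_\gamma(g_a)(\alpha)$, and the estimate drops out of the $\ell^\infty$--$\ell^1(\BGd,\mu_0)$ duality; this route, however, needs the precise form of the convolution operators from those references, so the hands-on computation above is probably cleaner to present.
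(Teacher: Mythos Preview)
Your proof is correct and follows essentially the same strategy as the paper's: average an arbitrary state over a F\o lner sequence with weights $d_\alpha^2$, pass to a weak-$*$ limit point, and verify approximate invariance via Frobenius reciprocity and the boundary estimate. The only difference is cosmetic bookkeeping---you use the equal-mass/$m_n(\beta)\leqslant d_\beta^2$ observation to collapse everything to a single residual term bounded by $|\partial_{\bar\gamma}F_n|_w$, whereas the paper keeps two residual sums and bounds them by $d_\beta|\partial_\beta F_n|_w$; both vanish by the F\o lner condition.
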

\begin{proof}
  Take an arbitrary state $\fai$ on $\CA$. Define
 $\fai_n$ by $$\fai_n(a)=\frac{1}{|F_n|_w}\sum_{\alpha\in F_n} d_\alpha\fai(\chi(\alpha)\cdot a)$$ for every $a$ in $\CA$.

Since $\|\chi(\alpha)\|\leq d_\alpha$, we have $\|\fai_n\|\leq 1$. Note that $\fai_n(1_\CA)=1$. Hence $\fai_n$ is a state on $\CA$. Equipped with weak-$*$ topology, there exists a limit point of $\fai_n$, say, $\psi$.

  Next we prove that $\psi$ is an invariant state.

  For every $a$ in $\CA$ and $\beta$ in $\widehat{\mathbb{G}}$, we have
  $$\fai_n(\chi(\beta)\cdot a)-d_\beta\fai_n(a)
    =\frac{1}{|F_n|_w}\sum_{\alpha\in F_n} d_\alpha\fai(\chi(\alpha)\chi(\beta)\cdot a)-\frac{1}{|F_n|_w}\sum_{\alpha\in F_n} d_\alpha d_\beta\fai(\chi(\alpha)\cdot a). $$
 On one hand
 \begin{align*}
              &\frac{1}{|F_n|_w}\sum_{\alpha\in F_n} d_\alpha\fai(\chi(\alpha)\chi(\beta)\cdot a)
               =\frac{1}{|F_n|_w}\sum_{\alpha\in F_n}d_\alpha\sum_{\gamma\in\widehat{\mathbb{G}}}N_{\alpha,\beta}^\gamma\fai(\chi(\gamma)\cdot a) \\
               &=\frac{1}{|F_n|_w}\sum_{\alpha\in F_n}d_\alpha\left(\sum_{\gamma\in F_n}+\sum_{\gamma\notin F_n}\right) N_{\alpha,\beta}^\gamma\fai(\chi(\gamma)\cdot a)
 \end{align*}
 On the other hand
 \begin{align*}
  &\frac{1}{|F_n|_w}\sum_{\alpha\in F_n} d_\alpha d_\beta\fai(\chi(\alpha)\cdot a)  =\frac{1}{|F_n|_w}\sum_{\alpha\in F_n} d_\alpha d_{\bar{\beta}}\fai(\chi(\alpha)\cdot a) \\
  &=\frac{1}{|F_n|_w}\sum_{\alpha\in F_n} \sum_{\gamma\in\widehat{\mathbb{G}}}N_{\alpha,\bar{\beta}}^\gamma d_\gamma\fai(\chi(\alpha)\cdot a)=\frac{1}{|F_n|_w}\sum_{\alpha\in F_n} \sum_{\gamma\in\widehat{\mathbb{G}}}N_{\gamma,\beta}^\alpha d_\gamma\fai(\chi(\alpha)\cdot a)  \\
  &=\frac{1}{|F_n|_w}\sum_{\gamma\in F_n} \sum_{\alpha\in\widehat{\mathbb{G}}}N_{\alpha,\beta}^\gamma d_\alpha\fai(\chi(\gamma)\cdot a)=\frac{1}{|F_n|_w}\sum_{\gamma\in F_n} \left(\sum_{\alpha\in F_n}+\sum_{\alpha\notin F_n}\right)N_{\alpha,\beta}^\gamma d_\alpha\fai(\chi(\gamma)\cdot a).
 \end{align*}
 So
  \begin{align*}
    &|\fai_n(\chi(\beta)\cdot a)-d_\beta\fai_n(a)| \\
    &=\abs{\frac{1}{|F_n|_w}\sum_{\alpha\in F_n}d_\alpha\sum_{\gamma\notin F_n} N_{\alpha,\beta}^\gamma\fai(\chi(\gamma)\cdot a)-\frac{1}{|F_n|_w}\sum_{\gamma\in F_n}\sum_{\alpha\notin F_n}N_{\alpha,\beta}^\gamma d_\alpha\fai(\chi(\gamma)\cdot a)} \\
   &\leqslant \frac{1}{|F_n|_w}\sum_{\alpha\in F_n}\sum_{\gamma\notin F_n}d_\alpha N_{\alpha,\beta}^\gamma d_\gamma\|a\|+\frac{1}{|F_n|_w}\sum_{\gamma\in F_n}\sum_{\alpha\notin F_n}N_{\alpha,\beta}^\gamma d_\alpha d_\gamma\|a\|  \\
   &\leqslant d_\beta \frac{|\partial_\beta F_n|_w}{|F_n|_w}\|a\|\to 0,
  \end{align*}
  as $n\to\infty$.

  This proves that $\psi$ is invariant.
\end{proof}

Let $\CA$ be a unital $C^*$-algebra, and $R(\CI)$ be a fusion algebra with a basis $\CI$.

For any two elements $f=\sum_{\alpha\in\CI}\delta_\alpha\otimes a_\alpha,\ g=\sum_{\beta\in\CI}\delta_\beta\otimes b_\beta$ in $C_c(\CI)\otimes\CA$, we put a $\CA$-value inner product on $C_c(\CI)\otimes\CA$ by
$$\langle f\,\lvert\, g\rangle_\CA:=\sum_{\alpha,\beta\in\CI}\delta_{\alpha,\beta} a_\beta b^*_\beta\in\CA,$$
a $\CA$-right (left) multiplier on $C_c(\CI)\otimes\CA$ by
$$fc:=\sum_{\alpha\in\CI}\delta_\alpha\otimes a_\alpha c \ \left(cf:=\sum_{\alpha\in\CI}\delta_\alpha\otimes ca_\alpha\right),\ \forall c\in\CA$$
and a norm
$$\norm{f}_{2,\CA}:=\norm{\langle f\,\lvert\, f\rangle_\CA}_\CA^{\frac{1}{2}}.$$
Under this norm, we can see that the completion of $C_c(\CI)\otimes\CA$ is $\ell^2(\CI)\otimes\CA$.

For the given $f$ and $g$ above, we define a twisted convolution related to a fusion algebraic action $\sigma:R(\CI)\curvearrowright \CA$ by
$$f*_\sigma g:=\sum_{\alpha,\beta\in\BGd}l_{\alpha}(\delta_\beta)\otimes a_\alpha\sigma_\alpha(b_\beta).$$

When $\CI$ is a discrete quantum group $\BGd$, applying the orthogonality relations in $\Pol(\BG)$ (see \cite[Theorem 1.4.3]{Nes-Tus2013} or \cite[Section 5]{Wor1987}), the inner product defined above coincides with the canonical product on $C_c(\BGd)$ determined by
\begin{equation}\label{eq:innerproduct}
\langle \delta_\alpha\,\lvert\,\delta_\beta\rangle:=h_\BG(\chi(\alpha)\chi(\beta)^*)=\delta_{\alpha,\beta}, \quad \forall\alpha,\beta\in\BGd.
\end{equation}

\begin{definition}\label{def:amen-act}
An action $\sigma: R(\CI)\curvearrowright \CA$ of a fusion algebra $R(\CI)$ on a unital $C^*$-algebra $\CA$ is said to be {\bf amenable}, if there exist a sequence $\{\xi_n\}_{n\in\BN_0}\in C_c(\CI)\otimes\CA^+$ such that
\begin{enumerate}
  \item $\langle\xi_n\,\lvert\,\xi_n\rangle_\CA=1_\CA$.
  \item $\xi_na=a\xi_n$ for any $a\in\CA$.
  \item $\norm{\delta_\gamma*_\sigma\xi_n-\xi_n}_{2,\CA}\rightarrow 0$ as $n\rightarrow +\infty$, for any $\gamma\in\BGd$.
\end{enumerate}
The sequence $\{\xi_n\}_{n\in\BN_0}$ is called {\bf a  F$\o$lner sequence for the action $\sigma$}.

In particular, when $\CI$ is a discrete quantum group $\BGd$, an action $\rho:\BGd\curvearrowright \CA$ is said to be {\bf FA-amenable} if the CFA form $\sigma:\BGd \overset{\textrm{\tiny{cfa}}}{\curvearrowright} \CA$ of $\rho$, as a fusion algebraic action, is amenable in the above sense.
\end{definition}

\begin{remark}\label{rem:amen-ozawa}
In the classical case that $\BGd$ is a discrete group $\Gamma$, our definition of FA-amenability for actions agrees with Ozawa's definition \cite[Definition 4.3.1]{Brow-Ozaw2008}. In particular, when $\BGd$ is a discrete group and $\mathfrak{X}$ is a compact space, the two definitions both coincide with the definition of amenability of a discrete group action on a compact space (see \cite[Definition 4.3.5 and Lemma 4.3.7]{Brow-Ozaw2008}).
\end{remark}

Firstly, we will need the following simple lemma.

\begin{lemma}\label{lem:appunit}
Suppose that $\BGd$ is amenable, and $\{F_n\}_{n\in\BN}$ is a sequence of F$\o$lner sets of $\BGd$.
Let $\xi_n=\dfrac{1}{\sqrt{|F_n|_w}}\sum_{\alpha\in F_n}d_\alpha \delta_\alpha\otimes 1_\CA$ for all $n\in\BN_0$.
For any $\alpha'\in\widehat{\mathbb{G}}$,  then one has  $$\lim_{n\to\infty}\norm{(l_{\alpha'}\otimes \id_\CA)(\xi_n)-\xi_n}_{2,\CA}=0.$$
\end{lemma}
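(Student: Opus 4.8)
The plan is to reduce the statement to the F\o{}lner condition of Definition~\ref{def:amen-cqg} by unwinding the definitions of $l_{\alpha'}$, of the $\CA$-valued inner product, and of $|\cdot|_w$. Since $\xi_n = \tfrac{1}{\sqrt{|F_n|_w}}\sum_{\alpha\in F_n}d_\alpha\,\delta_\alpha\otimes 1_\CA$ has all its $\CA$-components equal to scalar multiples of $1_\CA$, the $\CA$-valued norm $\norm{\cdot}_{2,\CA}$ restricted to such vectors is literally the scalar $\ell^2(\BGd,\mu_0)$-norm under the identification $\delta_\alpha\mapsto\delta_\alpha$ with weight $d_\alpha^2$; indeed $\norm{\xi_n}_{2,\CA}^2 = \tfrac{1}{|F_n|_w}\sum_{\alpha\in F_n}d_\alpha^2 = 1$ by \eqref{eq:innerproduct}, so $\xi_n$ is already normalized. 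Thus it suffices to estimate $\norm{(l_{\alpha'}\otimes\id_\CA)(\xi_n)-\xi_n}_{2,\CA}^2$ as a sum over $\BGd$ of squared scalar coefficients.

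First I would compute $(l_{\alpha'}\otimes\id_\CA)(\xi_n)$ explicitly. By the formula for $l_{\alpha'}$ in Remark~\ref{rem:intw-l-leftrepn}, $l_{\alpha'}(\delta_\alpha) = \tfrac{1}{d_{\alpha'}}\sum_{\gamma\in\BGd}N^{\gamma}_{\alpha',\alpha}\delta_\gamma$, so
\[
(l_{\alpha'}\otimes\id_\CA)(\xi_n) \;=\; \frac{1}{\sqrt{|F_n|_w}}\,\frac{1}{d_{\alpha'}}\sum_{\gamma\in\BGd}\Bigl(\sum_{\alpha\in F_n}d_\alpha N^{\gamma}_{\alpha',\alpha}\Bigr)\delta_\gamma\otimes 1_\CA .
\]
Using the dimension identity $d_{\alpha'}d_\alpha = \sum_\gamma N^\gamma_{\alpha',\alpha}d_\gamma$ and Frobenius reciprocity $N^\gamma_{\alpha',\alpha}=N^{\alpha}_{\bar{\alpha'},\gamma}$, I would identify the ``expected'' vector: the coefficient of $\delta_\gamma$ in $\xi_n$ is $d_\gamma/\sqrt{|F_n|_w}$ if $\gamma\in F_n$ and $0$ otherwise. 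Then the difference vector has, in the $\delta_\gamma$ slot, a coefficient that is $0$ unless either $\gamma\notin F_n$ but some $\alpha\in F_n$ contributes (i.e. $N^\gamma_{\alpha',\alpha}>0$), or $\gamma\in F_n$ but the sum $\sum_{\alpha\in F_n}d_\alpha N^\gamma_{\alpha',\alpha}$ fails to equal $d_{\alpha'}d_\gamma = \sum_{\alpha\in\BGd}d_\alpha N^\gamma_{\alpha',\alpha}$, which happens exactly when some $\alpha\notin F_n$ has $N^\gamma_{\alpha',\alpha}>0$. In both cases $\gamma\in\partial_{\bar{\alpha'}}F_n$ (or a comparable boundary set), and the relevant coefficients are controlled by $\sum_{\alpha}d_\alpha N^\gamma_{\alpha',\alpha}\le d_{\alpha'}d_\gamma$.

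Putting this together, I would bound $\norm{(l_{\alpha'}\otimes\id_\CA)(\xi_n)-\xi_n}_{2,\CA}^2$ by $\tfrac{1}{|F_n|_w}$ times a sum of $d_\gamma^2$ over $\gamma$ in a boundary set, with a multiplicative constant depending only on $\alpha'$ (something like $d_{\alpha'}^2$ or $2d_{\alpha'}^2$, coming from the two types of boundary terms and the crude estimate on the coefficients). This gives
\[
\norm{(l_{\alpha'}\otimes\id_\CA)(\xi_n)-\xi_n}_{2,\CA}^2 \;\le\; C_{\alpha'}\,\frac{|\partial_{\alpha'}F_n|_w + |\partial_{\bar{\alpha'}}F_n|_w}{|F_n|_w},
\]
which tends to $0$ by Definition~\ref{def:amen-cqg} applied to $\gamma=\alpha'$ and $\gamma=\bar{\alpha'}$. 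The main obstacle I anticipate is purely bookkeeping: correctly matching the boundary set that naturally arises from the convolution $l_{\alpha'}$ (which involves $N^\gamma_{\alpha',\alpha}$ and hence, after reciprocity, $\bar{\alpha'}$) with the set $\partial_\gamma F_n$ as defined, and tracking the weights $d_\gamma$ versus $d_\gamma^2$ so that the final quotient is genuinely a weighted-cardinality ratio rather than an unweighted one. Once the indexing is pinned down, the estimate itself is a one-line application of Cauchy--Schwarz together with $N^\gamma_{\alpha',\alpha}d_\gamma \le d_{\alpha'}d_\alpha$.
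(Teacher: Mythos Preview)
Your proposal is correct and follows essentially the same route as the paper: expand $(l_{\alpha'}\otimes\id_\CA)(\xi_n)$ via the fusion rules, split according to whether the resulting index lies in $F_n$ or not, use Frobenius reciprocity $N^\gamma_{\alpha',\alpha}=N^\alpha_{\bar{\alpha'},\gamma}$ together with $d_{\alpha'}d_\gamma=\sum_\alpha N^\gamma_{\alpha',\alpha}d_\alpha$ to recognise $\xi_n$ inside the $F_n$-part, and bound both remainder terms by $\dfrac{|\partial_{\bar{\alpha'}}F_n|_w}{|F_n|_w}$. One small slip: for scalar vectors the norm $\norm{\cdot}_{2,\CA}$ is the ordinary $\ell^2(\BGd)$-norm with counting measure (so $\norm{\xi_n}_{2,\CA}^2=\tfrac{1}{|F_n|_w}\sum_{\alpha\in F_n}d_\alpha^2=1$ directly), not the $\mu_0$-weighted one; this does not affect your estimates.
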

\begin{proof}
Note that
\begin{align*}
(l_{\alpha'}\otimes \id_\CA)(\xi_n)&=\dfrac{1}{\sqrt{|F_n|_w}}\sum_{\alpha\in F_n}(l_{\alpha'}\otimes \id_\CA)(d_\alpha \delta_\alpha\otimes 1_\CA) \\
  &=\dfrac{1}{\sqrt{|F_n|_w}}\sum_{\alpha\in F_n}\left(\frac{d_\alpha}{d_{\alpha'}}\sum_{\beta\in\widehat{\mathbb{G}},\,  N^\beta_{\alpha',\alpha}>0}N^\beta_{\alpha',\alpha}\delta_\beta\otimes 1_\CA\right) \\
  &=\dfrac{1}{\sqrt{|F_n|_w}}\sum_{\alpha\in F_n}\frac{d_\alpha}{d_{\alpha'}}\left(\sum_{\beta\in F_n}+\sum_{\beta\notin F_n, \, N^\beta_{\alpha',\alpha}>0}\right) N^\beta_{\alpha',\alpha}\delta_\beta\otimes 1_\CA.
\end{align*}
Also
\begin{align*}
&\norm{\dfrac{1}{\sqrt{|F_n|_w}}\sum_{\alpha\in F_n}\frac{d_\alpha}{d_{\alpha'}}\sum_{\beta\notin F_n, \, N^\beta_{\alpha',\alpha}>0} N^\beta_{\alpha',\alpha}\delta_\beta\otimes 1_\CA}_{2,\CA}^2 \\
  &\leqslant\dfrac{1}{|F_n|_w}\sum_{\alpha\in F_n}\sum_{\beta\notin F_n, \, N^\beta_{\alpha',\alpha}>0}\frac{d_\alpha^2}{d_{\alpha'}^2} (N^\beta_{\alpha',\alpha})^2 \\
  &=\dfrac{1}{|F_n|_w}\sum_{\alpha\in F_n}\sum_{\beta\notin F_n, \, N^\beta_{\alpha',\alpha}>0}\frac{d_\alpha^2}{d_{\alpha'}^2} (N^\alpha_{\bar{\alpha'},\beta})^2 \\
  &\leqslant\dfrac{1}{|F_n|_w}\sum_{\beta\in\partial_{\bar{\alpha'}}F_n}d_\beta^2\to 0
\end{align*}
as $n\to +\infty$.
Moreover
\begin{align*}
&\dfrac{1}{\sqrt{|F_n|_w}}\sum_{\alpha\in F_n}\frac{d_\alpha}{d_{\alpha'}}\sum_{\beta\in F_n} N^\beta_{\alpha',\alpha}\delta_\beta\otimes 1_\CA \\
&=\dfrac{1}{\sqrt{|F_n|_w}}\sum_{\beta\in F_n}\sum_{\alpha\in F_n}\frac{d_\alpha}{d_{\alpha'}}N^\alpha_{\bar{\alpha'},\beta}\delta_\beta\otimes 1_\CA  \\
&=\dfrac{1}{\sqrt{|F_n|_w}}\sum_{\beta\in F_n}\left(\sum_{\alpha\in \widehat{\mathbb{G}}}-\sum_{\alpha\notin F_n, N^\alpha_{\bar{\alpha'},\beta}>0}\right)\frac{d_\alpha}{d_{\alpha'}}N^\alpha_{\bar{\alpha'},\beta}\delta_\beta\otimes 1_\CA.
\end{align*}
On one hand, one has
$$\dfrac{1}{\sqrt{|F_n|_w}}\sum_{\beta\in F_n}\sum_{\alpha\in \widehat{\mathbb{G}}}\frac{d_\alpha}{d_{\alpha'}}N^\alpha_{\bar{\alpha'},\beta}\delta_\beta\otimes 1_\CA=\dfrac{1}{\sqrt{|F_n|_w}}\sum_{\beta\in F_n}d_\beta\delta_\beta\otimes 1_\CA=\xi_n.$$
On the other hand
\begin{align*}
&\norm{\dfrac{1}{\sqrt{|F_n|_w}}\sum_{\beta\in F_n}\sum_{\alpha\notin F_n, N^\alpha_{\bar{\alpha'},\beta}>0}\frac{d_\alpha}{d_{\alpha'}}N^\alpha_{\bar{\alpha'},\beta}\delta_\beta\otimes 1_\CA}_{2,\CA}^2 \\
  &=\dfrac{1}{|F_n|_w}\sum_{\beta\in F_n}\sum_{\alpha\notin F_n, N^\alpha_{\bar{\alpha'},\beta}>0}\frac{d_\alpha^2}{d_{\alpha'}^2} (N^\beta_{\alpha',\alpha})^2 \\
   &\leqslant\dfrac{1}{|F_n|_w}\sum_{\beta\in\partial_{\bar{\alpha'}}F_n}d_\beta^2\to 0
\end{align*}
as $n\to\infty$.

We complete the proof.
\end{proof}

In the proposition below, we will apply the previous lemma to clarify the equivalence between FA-amenable actions and amenable discrete quantum groups, and generalize the classical result \cite[Exercise 4.4.1]{Brow-Ozaw2008}.

\begin{proposition}\label{prop:basicequiv}
Let $\BGd$ be a discrete quantum group. Then the following statements are equivalent:

\smnoind
$(a)$ The discrete quantum group $\BGd$ is amenable.

\smnoind
$(b)$ Every action is FA-amenable.

\smnoind
$(c)$  The trivial action on $\BC$ is FA-amenable.
\end{proposition}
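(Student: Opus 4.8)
The plan is to prove the cycle $(a)\Rightarrow(b)\Rightarrow(c)\Rightarrow(a)$. The implication $(b)\Rightarrow(c)$ is immediate: the trivial action of $\BGd$ on $\BC$ is a particular action, so if every action is FA-amenable then so is this one. The real work is in the other two implications.

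For $(a)\Rightarrow(b)$: assume $\BGd$ is amenable and let $\rho:\BGd\curvearrowright\CA$ be any action, with CFA form $\sigma:\BGd \overset{\textrm{\tiny{cfa}}}{\curvearrowright}\CA$. Fix a sequence $\{F_n\}$ of F\o lner sets for $\BGd$ (Definition~\ref{def:amen-cqg}), and set
$$\xi_n=\dfrac{1}{\sqrt{|F_n|_w}}\sum_{\alpha\in F_n}d_\alpha\,\delta_\alpha\otimes 1_\CA \in C_c(\BGd)\otimes\CA^+.$$
First I would verify conditions (1) and (2) of Definition~\ref{def:amen-act}: condition (1) follows from $\langle\delta_\alpha\mid\delta_\beta\rangle=\delta_{\alpha,\beta}$ (equation~\eqref{eq:innerproduct}) since $\langle\xi_n\mid\xi_n\rangle_\CA=\frac{1}{|F_n|_w}\sum_{\alpha\in F_n}d_\alpha^2\,1_\CA=1_\CA$, and condition (2) is clear because each $\CA$-component of $\xi_n$ is a scalar multiple of $1_\CA$, which is central. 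For condition (3), the key observation is that because $\xi_n$ has all $\CA$-entries proportional to $1_\CA$ and $\sigma_\gamma(1_\CA)=1_\CA$, the twisted convolution $\delta_\gamma*_\sigma\xi_n$ reduces to $(l_\gamma\otimes\id_\CA)(\xi_n)$; hence $\norm{\delta_\gamma*_\sigma\xi_n-\xi_n}_{2,\CA}=\norm{(l_\gamma\otimes\id_\CA)(\xi_n)-\xi_n}_{2,\CA}\to 0$ by Lemma~\ref{lem:appunit}. This shows $\{\xi_n\}$ is a F\o lner sequence for $\sigma$, so $\rho$ is FA-amenable.

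For $(c)\Rightarrow(a)$: assume the trivial action $\iota:\BGd\curvearrowright\BC$ is FA-amenable, so there is a F\o lner sequence $\{\xi_n\}\subseteq C_c(\BGd)\otimes\BC^+=C_c(\BGd)^+$ with $\langle\xi_n\mid\xi_n\rangle=1$ and $\norm{\delta_\gamma*_\iota\xi_n-\xi_n}_2\to 0$ for all $\gamma\in\BGd$. Since the action is trivial, $\sigma_\gamma=\id_\BC$ and $\delta_\gamma*_\iota\xi_n=(l_\gamma\otimes\id)(\xi_n)=l_\gamma(\xi_n)$; writing $\xi_n\in C_c(\BGd)$ as a nonnegative function, the conditions say precisely that $\{\xi_n\}$ is a norm-one sequence in $\ell^2(\BGd)$ with $\norm{l_\gamma(\xi_n)-\xi_n}_2\to 0$ for every $\gamma$. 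Now the unitary $U_{\BGd,\mu_0}:\ell^2(\BGd)\to\ell^2(\BGd,\mu_0)$ of Remark~\ref{rem:intw-l-leftrepn} intertwines $l_\gamma$ with the convolution operator $\lambda_{2,\gamma}=\lambda_{2,\delta_\gamma}$; transporting $\{\xi_n\}$ through $U_{\BGd,\mu_0}$ produces a norm-one sequence $\{f_n\}\subseteq\ell^2(\BGd,\mu_0)$ with $\norm{\lambda_{2,\gamma}(f_n)-f_n\|_{2,\mu_0}\to 0$ for all $\gamma\in\BGd$, i.e.\ an almost invariant $\ell^2$-sequence in the sense of Definition~\ref{def:almost-inv-seq}. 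By Remark~\ref{rem:amen-fa-izum-kyed} this makes $R(\BGd)$ amenable, and Theorem~\ref{thm:amen-fa-dqg} then gives that $\BGd$ is amenable.

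The step I expect to require the most care is $(c)\Rightarrow(a)$, specifically checking that the F\o lner-sequence condition \emph{literally} unwinds (for the trivial action on $\BC$) into Hiai--Izumi's almost-invariant-sequence condition — one must confirm that $C_c(\BGd)\otimes\BC$ is genuinely $C_c(\BGd)$ with the inner product \eqref{eq:innerproduct} matching the $\mu_0$-independent $\ell^2$-pairing used in Definition~\ref{def:amen-act}, and that the twisted convolution $*_\iota$ collapses exactly to the operator $l_\gamma$ rather than to $\lambda_{2,\gamma}$, so that the unitary rescaling of Remark~\ref{rem:intw-l-leftrepn} is the precise bridge needed before invoking Remark~\ref{rem:amen-fa-izum-kyed}. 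Everything else is bookkeeping once Lemma~\ref{lem:appunit} is in hand.
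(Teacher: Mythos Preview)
Your proposal is correct and follows essentially the same route as the paper: the same cycle $(a)\Rightarrow(b)\Rightarrow(c)\Rightarrow(a)$, the same F\o lner vectors $\xi_n=\frac{1}{\sqrt{|F_n|_w}}\sum_{\alpha\in F_n}d_\alpha\,\delta_\alpha\otimes 1_\CA$ together with Lemma~\ref{lem:appunit} for $(a)\Rightarrow(b)$, and for $(c)\Rightarrow(a)$ the same reduction $\delta_\gamma*_{\sigma_0}\xi_n=l_\gamma(\xi_n)$, the unitary $U_{\BGd,\mu_0}$ of Remark~\ref{rem:intw-l-leftrepn} to pass to $\lambda_{2,\gamma}$, and then Remark~\ref{rem:amen-fa-izum-kyed} plus Theorem~\ref{thm:amen-fa-dqg}. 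If anything, you have spelled out conditions~(1)--(2) of Definition~\ref{def:amen-act} and the collapse of the twisted convolution more explicitly than the paper does.
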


\begin{proof}
$(a)\Rightarrow (b).$ Let $\CA$ be a unital $C^*$-algebra. For any given action $\rho:\BGd \curvearrowright \CA$ with its CFA form $\sigma:\BGd \overset{\textrm{\tiny{cfa}}}{\curvearrowright} \CA$, if $\BGd$ is amenable, then, using $\{\xi_n\}_{n\in\BN}$ in  Lemma~\ref{lem:appunit} to check the conditions in Definition~\ref{def:amen-act} one by one, we can immediately see that $\rho$ is FA-amenable.

\noindent
$(b)\Rightarrow (c).$ It is trivial.

\noindent
$(c)\Rightarrow (a).$ Let $\sigma_0:\BGd \overset{\textrm{\tiny{cfa}}}{\curvearrowright} \BC$ is a trivial FA-amenable CFA action of a discrete quantum group $\BGd$ on $\BC$.
Since $\sigma_0$ is amenable, there exists a F$\o$lner sequence $\{\xi_n\}_{n\in\BN_0}\in C_c(\BGd)$ such that
\begin{equation}\label{eq:approx-convolution}
\norm{\delta_\gamma*_{\sigma_0}\xi_n-\xi_n}_2\rightarrow 0
\end{equation}
as $n\rightarrow +\infty$ for any $\gamma\in\BGd$.

Due to the triviality of the action $\sigma_0$, we can see that $\delta_\gamma*_{\sigma_0} f=l_{\gamma}(f)$ for any $f\in\ell^2(\BGd)$ and every $\gamma\in\BGd$.  So, by \eqref{eq:approx-convolution}, we have
\begin{equation}
\lim_{n\rightarrow +\infty}\norm{l_\gamma(\xi_n)-\xi_n}_2\rightarrow 0 \text{ as }n\rightarrow +\infty, \quad\forall\gamma\in\BGd,
\end{equation}
which, combined with Remark~\ref{rem:intw-l-leftrepn},  ensures that
\begin{align}\label{eq:Fseq-leftrepn}
\lim_{n\rightarrow +\infty}\norm{\lambda_{2,\gamma}U_{\BGd,\mu_0}(\xi_n)-U_{\BGd,\mu_0}(\xi_n)}_{2,\mu_0}&=\lim_{n\rightarrow +\infty}\norm{U_{\BGd,\mu_0}^{-1}\lambda_{2,\gamma}U_{\BGd,\mu_0}(\xi_n)-\xi_n}_2\\
\nonumber &=\lim_{n\rightarrow +\infty}\norm{l_\gamma(\xi_n)-\xi_n}_2\rightarrow 0,\quad n\rightarrow +\infty.
\end{align}

For any $n\in\BN_0$, set $\zeta_n:=U_{\BGd,\mu_0}(\xi_n)$, whose $\norm{\cdot}_2$-norm is also $1$.

Combined with Remark~\ref{rem:amen-fa-izum-kyed}, the equation \eqref{eq:Fseq-leftrepn} tells us that the number $1$ is contained in $\spec(\lambda_{2, \mu})$ for every probability measure $\mu$ on $\BGd$. So, by Definition~\ref{def:amen-fa}, we know the fusion algebra $R(\BGd)$ is amenable, and so the discrete quantum group $\BGd$ is amenable because of Theorem~\ref{thm:amen-fa-dqg}.
\end{proof}

Finally, as an application of fusion algebraic actions, the following theorem, which is the main theorem in our paper, will provides a characterization of amenability for discrete quantum groups through FA-amenable actions and FA-invariant states on compact quantum spaces.

\begin{theorem}\label{thm:mainthm}
Let $\rho:\BGd \curvearrowright \CA$ be an action of a discrete quantum group $\BGd$ on a compact quantum space $\CA$, and $\sigma:\BGd \overset{\textrm{\tiny{cfa}}}{\curvearrowright} \CA$ be the CFA form of $\rho$. Then the following are equivalent:

\smnoind
$(a)$ The discrete quantum group $\BGd$ is amenable.

\smnoind
$(b)$ The action $\rho$ is FA-amenable, and there exists a $\rho$-FA-invariant state on the $C^*$-algebra $\CA$.
\end{theorem}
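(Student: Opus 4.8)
The plan is to prove the two implications $(a)\Rightarrow(b)$ and $(b)\Rightarrow(a)$ separately, with the bulk of the work lying in the latter. For $(a)\Rightarrow(b)$: if $\BGd$ is amenable, then by Proposition~\ref{prop:basicequiv} the action $\rho$ is automatically FA-amenable (this is exactly the implication $(a)\Rightarrow(b)$ there), and by Proposition~\ref{prop:invstate-exist} there exists a $\rho$-FA-invariant state on $\CA$. So this direction is an immediate assembly of results already proved, and requires essentially no new computation.

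For $(b)\Rightarrow(a)$: we are given an FA-amenable action $\rho$ with Følner sequence $\{\xi_n\}\subset C_c(\BGd)\otimes\CA^+$ as in Definition~\ref{def:amen-act}, and a $\rho$-FA-invariant state $\fai$ on $\CA$. Writing $\xi_n=\sum_{\alpha\in\BGd}\delta_\alpha\otimes a^{(n)}_\alpha$ with $a^{(n)}_\alpha\in\CA^+$ and finite support, the idea is to \emph{scalarize} the Følner sequence by applying $\fai$ to the $\CA$-valued inner products, obtaining vectors in $\ell^2(\BGd)$, and then show these scalar vectors form an almost-invariant $\ell^2$-sequence for the left convolution operators $l_\gamma$. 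Concretely, I would set
$$\eta_n:=\Big(\fai\big(a^{(n)}_\alpha (a^{(n)}_\alpha)^*\big)^{1/2}\Big)_{\alpha\in\BGd}\in\ell^2(\BGd),$$
or, more robustly, work directly with the positive functional $\alpha\mapsto\fai(\langle\delta_\alpha\otimes a^{(n)}_\alpha\,\lvert\,\delta_\alpha\otimes a^{(n)}_\alpha\rangle_\CA)$. Condition~(1) of Definition~\ref{def:amen-act} gives $\sum_\alpha a^{(n)}_\alpha (a^{(n)}_\alpha)^*=1_\CA$, so $\|\eta_n\|_2^2=\fai(1_\CA)=1$. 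The central estimate is to transfer condition~(3), $\norm{\delta_\gamma*_\sigma\xi_n-\xi_n}_{2,\CA}\to 0$, into $\norm{l_\gamma(\eta_n)-\eta_n}_2\to 0$ (up to a possible rescaling by $d_\gamma$). Here one expands $\delta_\gamma*_\sigma\xi_n=\sum_{\alpha}l_\gamma(\delta_\alpha)\otimes a^{(n)}_\alpha\sigma_\gamma(a^{(n)}_\alpha)$ — wait, more precisely $\delta_\gamma*_\sigma\xi_n=\sum_\alpha l_\gamma(\delta_\alpha)\otimes \sigma_\gamma(\text{coefficient})$ in the notation of the twisted convolution — uses $\|\langle f\,\lvert\,f\rangle_\CA\|_\CA\geq \fai(\langle f\,\lvert\,f\rangle_\CA)$, the triangle inequality in $\ell^2(\BGd,\fai)$, and crucially the $\rho$-FA-invariance $\fai(\sigma_\gamma(a))=\fai(a)$ to match the ``twisted'' coefficients with the ``untwisted'' ones after applying $\fai$. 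Once $\{\eta_n\}$ (or $\{\zeta_n:=U_{\BGd,\mu_0}(\eta_n)\}$) is shown to be almost invariant, I invoke Remark~\ref{rem:intw-l-leftrepn} to pass from $l_\gamma$ to $\lambda_{2,\gamma}$ on $\ell^2(\BGd,\mu_0)$, then Remark~\ref{rem:amen-fa-izum-kyed} to conclude $1\in\spec(\lambda_{2,\mu})$ for every probability measure $\mu$ on $\BGd$, hence $R(\BGd)$ is amenable by Definition~\ref{def:amen-fa}, hence $\BGd$ is amenable by Theorem~\ref{thm:amen-fa-dqg}. This final chain of deductions is identical to the endgame of the proof of Proposition~\ref{prop:basicequiv}$(c)\Rightarrow(a)$.

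The main obstacle I anticipate is the scalarization step: the $\CA$-valued inner product $\langle\cdot\,\lvert\,\cdot\rangle_\CA$ records cross terms $a^{(n)}_\alpha(a^{(n)}_\beta)^*$ only on the diagonal $\alpha=\beta$, so $\fai$ composed with it yields $\sum_\alpha\fai(a^{(n)}_\alpha(a^{(n)}_\alpha)^*)$ — a \emph{genuine} $\ell^2$-norm-square on $\BGd$, which is good — but after applying the twisted convolution the coefficients become $a^{(n)}_\alpha\sigma_\gamma(a^{(n)}_\alpha)$ and are generally \emph{not} positive, and the Hilbert-module structure does not see the convolution coefficients $N^\beta_{\gamma,\alpha}$ in an orthogonal way unless one is careful. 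The key trick to get the estimate through is that $\fai$ is a state (hence a positive functional of norm one), so $|\fai(x)|\le\|x\|$ and the Cauchy–Schwarz inequality for $\fai$ controls the cross terms; combined with FA-invariance, the bound $\norm{\delta_\gamma*_\sigma\xi_n-\xi_n}_{2,\CA}$ dominates the scalar quantity $\norm{l_\gamma(\eta_n)-\eta_n}_2$ (perhaps after inserting the $U_{\BGd,\mu_0}$ weights, since $l_\gamma$ carries an intrinsic $1/d_\gamma$ factor that must be reconciled with the normalization $\chi(\gamma)/d_\gamma$ in the CFA form). I would be careful to state the estimate at the level of $\ell^2(\BGd)$ with the counting measure first, where $l_\gamma$ is the natural operator appearing in the definition of $*_\sigma$, and only then transport to $\ell^2(\BGd,\mu_0)$ via the unitary $U_{\BGd,\mu_0}$; this mirrors precisely how \eqref{eq:Fseq-leftrepn} is handled in Proposition~\ref{prop:basicequiv}, so the argument should close cleanly once the scalarization inequality is in hand.
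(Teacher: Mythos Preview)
Your overall architecture matches the paper's exactly: $(a)\Rightarrow(b)$ by assembling Proposition~\ref{prop:invstate-exist} and Proposition~\ref{prop:basicequiv}, and $(b)\Rightarrow(a)$ by pushing the F\o lner sequence $\{\xi_n\}$ down to $\ell^2(\BGd)$ via the FA-invariant state $\fai$ and then feeding the result into the implication $(c)\Rightarrow(a)$ of Proposition~\ref{prop:basicequiv}. The endgame chain through Remark~\ref{rem:intw-l-leftrepn}, Remark~\ref{rem:amen-fa-izum-kyed}, Definition~\ref{def:amen-fa} and Theorem~\ref{thm:amen-fa-dqg} is also right (the paper simply packages this as ``the trivial action on $\BC$ is FA-amenable, so apply Proposition~\ref{prop:basicequiv}$(c)$'').

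The gap is in your scalarization. You propose $\eta_n(\alpha)=\fai\big(a^{(n)}_\alpha (a^{(n)}_\alpha)^*\big)^{1/2}$, which does give $\norm{\eta_n}_2=1$, but the square root destroys exactly the linearity that makes FA-invariance useful. After convolution, the $\beta$-coefficient of $\delta_\gamma*_\sigma\xi_n$ is $\tfrac{1}{d_\gamma}\sum_\alpha N^\beta_{\gamma,\alpha}\,\sigma_\gamma(a^{(n)}_\alpha)$, and there is no clean inequality relating $\fai(|\,\cdot\,|^2)^{1/2}$ of this sum to $l_\gamma(\eta_n)(\beta)=\tfrac{1}{d_\gamma}\sum_\alpha N^\beta_{\gamma,\alpha}\,\fai\big((a^{(n)}_\alpha)^2\big)^{1/2}$; you would need a Powers--St\o rmer / Mazur-map type argument in the Hilbert-module setting, and it is not clear it closes. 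The ``main obstacle'' you anticipate is self-inflicted by this choice.

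The paper instead takes the \emph{linear} push-forward
\[
\eta_n:=(\id_{\ell^2(\BGd)}\otimes\fai)(\xi_n)=\sum_{\alpha}\fai(a^{(n)}_\alpha)\,\delta_\alpha.
\]
Since $\delta_\gamma*_\sigma\xi_n=\sum_\alpha l_\gamma(\delta_\alpha)\otimes\sigma_\gamma(a^{(n)}_\alpha)$ and $\fai\circ\sigma_\gamma=\fai$, one gets in one line
\[
(\id\otimes\fai)(\delta_\gamma*_\sigma\xi_n)=\sum_\alpha l_\gamma(\delta_\alpha)\,\fai(a^{(n)}_\alpha)=l_\gamma(\eta_n)=\delta_\gamma*_{\sigma_0}\eta_n,
\]
and Cauchy--Schwarz for the state $\fai$ gives $\sum_\alpha|\fai(c_\alpha)|^2\le\fai\big(\sum_\alpha c_\alpha c_\alpha^*\big)\le\norm{\sum_\alpha c_\alpha c_\alpha^*}$, i.e.\ $\norm{(\id\otimes\fai)(f)}_2\le\norm{f}_{2,\CA}$. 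Hence $\norm{\delta_\gamma*_{\sigma_0}\eta_n-\eta_n}_2\le\norm{\delta_\gamma*_\sigma\xi_n-\xi_n}_{2,\CA}\to 0$, which is precisely the input to Proposition~\ref{prop:basicequiv}$(c)$. All the ingredients you list (the bound $\fai(\langle f\,\lvert\,f\rangle_\CA)\le\norm{\langle f\,\lvert\,f\rangle_\CA}$, FA-invariance to untwist $\sigma_\gamma$) are the right ones---you just need to apply $\fai$ to the coefficients themselves rather than to their squares.
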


\begin{proof}
The implication $(a)\Rightarrow (b)$ directly follows from Proposition~\ref{prop:invstate-exist} and Proposition~\ref{prop:basicequiv}(b). It suffices to prove $(b)\Rightarrow (a)$.

Suppose that $\fai$ is a $\rho$-FA-invariant state on $\CA$. Since the action $\rho$ is FA-amenable, there exists a sequence $\{\xi_n\}_{n\in\BN_0}\subseteq C_c(\BGd)\otimes\CA^+$ such that
\begin{equation}\label{eq:approx-convolution-sigma}
\norm{\delta_\gamma*_\sigma\xi_n-\xi_n}_{2,\CA}\rightarrow 0 \text{ as } n\rightarrow +\infty,
\end{equation}
for any $\gamma\in\BGd$.

For any $n\in\BN_0$, we may assume that $\xi_n=\sum_{\alpha\in \BGd}\delta_\alpha\otimes a_{\alpha,n}$, where  $a_{\alpha,n}\in\CA^+$.

Set $\eta_n:=(\id_{\ell^2(\BGd)}\otimes\fai)(\xi_n)=\sum_{\alpha\in \BGd}\fai(a_{\alpha,n})\delta_\alpha,$ and  let $\sigma_0:\BGd \overset{\textrm{\tiny{cfa}}}{\curvearrowright} \BC$ be a trivial action of $\BGd$ on $\BC$.

Then, by \eqref{eq:approx-convolution-sigma} and $\sigma$-CFA-invariance of $\fai$, one has
\begin{align*}
\norm{\delta_\gamma*_{\sigma_0}\eta_n-\eta_n}_2 &=\norm{\sum_{\alpha\in \BGd}l_\gamma(\delta_\alpha)\otimes\fai(a_{\alpha,n})-\eta_n}_2\\
&=\norm{\sum_{\alpha\in \BGd} l_\gamma(\delta_\alpha)\otimes\fai(\sigma_\alpha(a_{\alpha,n}))-\eta_n}_2\\
&=\norm{(\id_{\ell^2(\BGd)}\otimes\fai)(\delta_\gamma*_\sigma\xi_n)-(\id_{\ell^2(\BGd)}\otimes\fai)(\xi_n)}_2\\
&\leqslant\norm{\delta_\gamma*_\sigma\xi_n-\xi_n}_{2,\CA}\rightarrow 0, \text{ as }n\rightarrow +\infty,
\end{align*}
which implies that the trivial action $\sigma_0$ on $\BC$ is FA-amenable.

Hence $\BGd$ is amenable by Proposition~\ref{prop:basicequiv}(c).
\end{proof}

\begin{remark}
When $\mathfrak{X}$ is a compact space equipped with an action of a discrete group $\Gamma$, due to Example~\ref{ex:dqg-cfa-act}(b) and Remark~\ref{rem:amen-ozawa}, Theorem~\ref{thm:mainthm} actually implies that $\Gamma$ is amenable if and only if this action is amenable in the sense of \cite[Definition 4.3.5]{Brow-Ozaw2008} and there exists an invariant probability measure under this action.
\end{remark}

\bigskip

\section*{Acknowledgement}

We would like to sincerely appreciate S.\ Neshveyev for pointing out the references \cite{Hab-Hat-Nesh2021}\cite{Nes-Yam2014}\cite{Banica1999} to us, and giving us many constructive suggestions which improve our paper a lot.

\bigskip


\begin{thebibliography}{999}



\bibitem[Anant93]{Anant1979} C.\ Anantharaman-Delaroche. Action moyennable d'un groupe localement compact sur une alg\`ebre de von Neumann. {\it Math.\ Scand.} {\bf 45} (1979), 289--304.

\bibitem[Banica99]{Banica1999}
T.\ Banica. Representations of compact quantum groups and subfactors. {\it J. Reine Angew. Math.} {\bf 509} (1999), 167--198.

\bibitem[Baaj-Skan93]{Baaj-Skan1993} S.\ Baaj and G.\ Skandalis. Unitaires multiplicatifs et
dualit\'e pour les produits crois\'es de $C^{*}$-alg\`ebres.  {\it Ann.\ scient.\ \'Ec.\ Norm.\ Sup.} {\bf 26} (1993), 425--488.

\bibitem[B\'ed-Tus03]{Bed-Tus2003}
E.\ B\'edos and L.\ Tuset. Amenability and co-amenability for locally compact quantum groups. {\it Internat.\ J.\ Math}.  {\bf 14} (2003), 865--884.

\bibitem[Brow-Ozaw08]{Brow-Ozaw2008} N.\ P.\ Brown and N.\ Ozawa. \textit{$C^*$-algebras and Finite-Dimensional Approximations.}  {\bf 86}, Graduate Studies in Mathematics, American Mathematical Society, 2008.

\bibitem[Pat88]{Pat1988} A.\ L.\ T.\ Paterson, \textit{Amenability}. {\bf 29}, Mathematical Surveys and Monographs, American Mathematical Society, 1988.

\bibitem[Dou90]{Dou1990} E.\ K.\ van Douwen. Mesures invariant under action of $\mathbb{F}_2$".  {\it Topology Appl.}
{\bf 34}(1990), 53--68.

\bibitem[Fra-Ska-Sol17]{Fra-Ska-Sol2017}
U.\ Franz, A.\ Skalski and P.\ Soltan.  Introduction to compact and discrete quantum groups. arXiv: 1703.10766v1.

\bibitem[Green69]{Green1969}F.\ P.\ Greenleaf. \textit{Invariant Means on Topological Groups and Their Applications}.
Van Nostrand Mathematical Studies,  No.16, Van Nostrand Reinhold Co., New York, 1969.

\bibitem[Hab-Hat-Nesh21]{Hab-Hat-Nesh2021}
E.\ Habbestad, L.\ Hataishi and S.\ Neshveyev.  Noncommutative Poisson boundaries and Furstenberg-Hamana boundaries of Drinfeld doubles. arXiv: 2105.03175v1.

\bibitem[Hia-Izu98]{Hia-Izu1998}
F.\ Hiai and M.\ Izumi. Amenability and strong amenability for fusion algebras with applications to subfactor theory.
{\it Internat. J. Math.} {\bf 9} (1998), no. 6, 669--722.

\bibitem[Kye08]{Kyed2008}
D.\ Kyed.  $L^2$-Betti numbers of coamenable quantum groups. {\it M\"{u}nster J. Math. } \textbf{ 1} (2008), 143--179.

\bibitem[Moa18]{Moa2018}
M.\ S.\ M.\ Moakhar.  Amenable actions of discrete quantum groups on von Neumann algebras. arXiv: 1803.04828v2.

\bibitem[Rund02]{Rund2002} V.\ Runde. \textit{Lectures on Amenability}. Lecture Notes in Mathematics,  {\bf 1774}, Springer, 2002.

\bibitem[Nesh-Tuset13]{Nes-Tus2013}
S.\ Neshveyev and L.\ Tuset.  \textit{Compact quantum groups and their representation categories}. Cours sp\'ecialis\'es. {\bf 20}. Soci\'et\'e Math\'ematique de France. 2013.


\bibitem[Nesh-Yama14]{Nes-Yam2014}
S.\ Neshveyev and M.\ Yamashita.  Categorical duality for Yetter-Drinfeld algebras. {\it Doc. Math.} {\bf 19} (2014), 1105--1139.

\bibitem[Tim-08]{Tim2008} T.\ Timmermann. \textit{An Introduction to Quantum Groups and Duality, From Hopf Algebras to Multiplicative Unitaries and Beyond}, EMS Publishing House, Zurich, 2008.

\bibitem[Vae-Ver07]{Vae-Ver2007}
S.\ Vaes and R.\ Vergnioux.  The boundary of universal discrete quantum groups, exactness and factoriality. {\it Duke J. Math. } \textbf{ 140} (2007), 35--84.

\bibitem[Voic-Dyke-Nica92]{Voic-Dyke-Nica1992} D.\ V.\ Voiculescu, K.\ J.\ Dykema and A.\ Nica.  \textit{Free Random variables}, {\bf 1}, CRM Monograph Series, American Mathematical Society, 1992.

\bibitem[Wor87]{Wor1987}
S.\ L.\ Woronowicz. Compact matrix pseudogroups. {\it Comm. Math. Phys.} \textbf{ 111} (1987), no. 4, 613-665.

\bibitem[Wor98]{Wor1998}
S.\ L.\ Woronowicz.  Compact quantum groups. {\it Sym\'etries Quantiques (Les Houches, 1995)},  845--884, North-Holland, Amsterdam, 1998.
\end{thebibliography}
\end{document}